\numberwithin{equation}{section}
\newtheorem{Theorem}{Theorem}[section]
\newtheorem{Lemma}[Theorem]{Lemma}
\newtheorem{Proposition}[Theorem]{Proposition}
{ \theoremstyle{definition}
\newtheorem{Definition}[Theorem]{Definition}
\newtheorem{Example}[Theorem]{Example}
\newtheorem{Remark}[Theorem]{Remark} }
\def\Weyl#1#2{{\mathcal{B}}(#1;#2)}
\def\AWeyl#1#2{{\mathcal{A}}(#1;#2)}
\def\rhom#1#2#3{{\operatorname{Hom}_{#1}(#2,#3)}}
\begin{document}

\newcommand{\arXivNumber}{1602.07456}

\renewcommand{\PaperNumber}{059}

\FirstPageHeading

\ShortArticleName{Noncommutative Dif\/ferential Geometry of Generalized Weyl Algebras}

\ArticleName{Noncommutative Dif\/ferential Geometry\\ of Generalized Weyl Algebras}

\Author{Tomasz BRZEZI\'NSKI~$^{\dag\ddag}$}

\AuthorNameForHeading{T.~Brzezi\'nski}

 \Address{$^\dag$~Department of Mathematics, Swansea University, Singleton Park, Swansea SA2 8PP, UK}
 \EmailD{\href{mailto:T.Brzezinski@swansea.ac.uk}{T.Brzezinski@swansea.ac.uk}}

 \Address{$^\ddag$~Department of Mathematics, University of Bia{\l}ystok,\\
 \hphantom{$^\ddag$}~K.~Cio{\l}kowskiego 1M, 15-245 Bia{\l}ystok, Poland}

\ArticleDates{Received February 29, 2016, in f\/inal form June 14, 2016; Published online June 23, 2016}

\Abstract{Elements of noncommutative dif\/ferential geometry of ${\mathbb Z}$-graded generalized Weyl algebras ${\mathcal A}(p;q)$ over the ring of polynomials in two variables and their zero-degree subalgebras ${\mathcal B}(p;q)$, which themselves are generalized Weyl algebras over the ring of polynomials in one variable, are discussed. In particular, three classes of skew derivations of ${\mathcal A}(p;q)$ are constructed, and three-dimensional f\/irst-order dif\/ferential calculi induced by these derivations are described. The associated integrals are computed and it is shown that the dimension of the integral space coincides with the order of the def\/ining polynomial $p(z)$. It is proven that the restriction of these f\/irst-order dif\/ferential calculi to the calculi on ${\mathcal B}(p;q)$ is isomorphic to the direct sum of degree 2 and degree $-2$ components of~${\mathcal A}(p;q)$. A~Dirac operator for~${\mathcal B}(p;q)$ is constructed from a (strong) connection with respect to this dif\/ferential calculus on the (free) spinor bimodule def\/ined as the direct sum of degree 1 and degree $-1$ components of~${\mathcal A}(p;q)$. The real structure of ${\rm KO}$-dimension two for this Dirac operator is also described.}

\Keywords{generalized Weyl algebra; skew derivation; dif\/ferential calculus; principal comodule algebra; strongly graded algebra; Dirac operator}

\Classification{16S38; 58B32; 58B34}

\section{Introduction}\label{sec.intro}
In this paper we continue the study initiated in \cite{Brz:gen} of some aspects of the noncommutative geo\-met\-ry of a class of degree-one {\em generalized Weyl algebras}~\cite{Bav:gweyl} or {\em rank-one hyperbolic algebras}~\cite{LunRos:Kas} over the polynomial ring in one and two variables. We denote these algebras respectively by~$\Weyl p q$ and~$\AWeyl p q$, where $q$ is a non-zero element of the ground f\/ield~${\mathbb K}$ (of characteristic 0) and $p$ is a~polynomial in one variable. $\Weyl p q$ can be interpreted as the coordinate algebra of a noncommutative surface, while, as shown in~\cite{Brz:gen}, $\AWeyl p q$ is a~noncommutative principal circle bundle over~$\Weyl p q$ provided zero is not a root of $p$. In algebraic terms this means that $\AWeyl p q$ is a~strongly ${\mathbb Z}$-graded algebra~\cite{Dad:gro} (or a Clif\/ford system~\cite{Dad:com}) and $\Weyl p q$ is isomorphic to the degree-zero part of $\AWeyl p q$. Perhaps best-known examples of such algebras are the standard quantum Podle\'s sphere \cite{Pod:sph} with the quantum ${\rm SU}_q(2)$-group~\cite{Wor:com} as a bundle over it (in this case $p$ is a~linear polynomial) which together form the quantum Hopf f\/ibration \cite{BrzMaj:gau}. More recent examples include quantum teardrops~\cite{BrzFai:tea} with quantum lens spaces~\cite{HonSzy:len} as bundles over them.

While in \cite{Brz:gen} we concentrated on establishing that $\AWeyl p q$ are principal comodule algebras~\cite{BrzHaj:Che} or noncommutative principal bundles~\cite{BrzMaj:gau} over $\Weyl p q$ and on some homological properties of~$\AWeyl p q$, the aim of the present article is to describe dif\/ferential aspects of these algebras. We begin by introducing three families of skew derivations on $\AWeyl p q$. Each family is labeled by a~scalar, and they include derivations of ${\mathbb Z}$-degrees~$-2$ (denoted by $\partial_+$), $0$ (denoted by~$\partial_0$) and~$2$ (denoted by $\partial_-$), respectively. The Leibniz rule for each of these skew derivations is twisted by an automorphism, denoted by~$\sigma_+$, $\sigma_0$ and $\sigma_-$, respectively. Choosing one member of each family, we form the system $(\partial_i,\sigma_i)$, $i=\pm, 0$ which is a~(diagonal) free twisted multi-derivation of size~3 and thus def\/ines a genuine derivation $d$ with values in a free left (and right) $\AWeyl p q$-module $\Omega$; see, e.g.,~\cite{BrzElK:int}. As a left $\AWeyl p q$-module, $\Omega$ is isomorphic to $\AWeyl p q^3$, while the right $\AWeyl p q$-action is twisted by automorphisms $\sigma_i$; see \eqref{right} for the def\/inition of the right action and \eqref{ext} for the def\/inition of $d$. Next we show in Theorem~\ref{thm.calc.A} that if the algebra $\AWeyl p q$ is {\em regular}, i.e., $p(0) \neq 0$ and $p(z)$ is coprime with its $q^2$-derivative (see Def\/inition~\ref{def.generic}) and $q^2\neq 1$, or $p$ has degree at most one and a non-zero constant term, then $\Omega = \AWeyl p q d\left(\AWeyl p q\right)$, that is $(\Omega, d)$ is a f\/irst-order dif\/ferential calculus over $\AWeyl p q$. In the complex case, if $q$ and all the coef\/f\/icients of $p$ are real, $\AWeyl p q$ can be made into a $*$-algebra and $(\Omega, d)$ into a $*$-calculus. In the case of the linear polynomial $p(z) = 1-z$ the $*$-calculus $(\Omega, d)$ coincides with the 3D (left) covariant calculus on ${\rm SU}_q(2)$ introduced by Woronowicz in~\cite{Wor:twi}. Since the bimodule of one-forms~$\Omega$ is free both as a left and right $\AWeyl p q$-module, the cotangent bundle of the noncommutative space described by $\AWeyl p q$ is trivial. Following the procedure described in~\cite{BrzElK:int} we associate a~divergence or a~hom-connection~\cite{Brz:con} to $(\Omega, d)$, establish that the dimension of its cokernel or the integral space is equal to the degree of~$p(z)$ and f\/ind a recursive formula that determines the associated integral.

Next we discuss the restriction of $(\Omega, d)$ to the calculus on the degree-zero subalgebra of~$\AWeyl p q$ isomorphic to $\Weyl p q$. We show that in the regular case, the sections of the cotangent bundle are isomorphic to the direct sum of degree~$-2$ and~$2$ components of $\AWeyl p q$ provided $q^4\neq 1$ or $p$ is linear. This interpretation gives a natural splitting of the calculus on $\Weyl p q$ into holomorphic and anti-holomorphic forms. Furthermore, it allows one to apply techniques developed by Beggs and Majid in~\cite{BegMaj:spe} to construct Dirac operators on $\Weyl p q$. The sections of the spinor bundle are isomorphic to the direct sum of degree $-1$ and $1$ components of~$\AWeyl p q$. We show that this direct sum is free as a left (or right) $\Weyl p q$-module, a result that mimics exactly what happens in the case of the standard quantum 2-sphere~\cite{Maj:Rie}. Again following the Beggs--Majid procedure from~\cite{BegMaj:spe} we equip thus constructed Dirac operators with real structures, and determine their ${\rm KO}$-dimension to be~2 (modulo~8). This further enforces heuristic understanding of regular generalized Weyl algebras $\Weyl p q$ as coordinate algebras of noncommutative Riemann surfaces.

Throughout, all algebras are associative, unital and def\/ined over a f\/ield ${\mathbb K}$ of characteris\-tic~0.

\section{Preliminaries}\label{sec.prem}
\begin{Definition}\label{def.generic}
Let ${\mathbb K}$ be a f\/ield of characteristic 0, $q\in {\mathbb K}$, and let $p(z)$ be a polynomial in one variable with coef\/f\/icients from ${\mathbb K}$. Let
\begin{gather}\label{q-derivative}
p_q(z) := \frac{p(qz) -p(z)}{(q -1)z},
\end{gather}
denote the $q$-derivative of $p$. We say that $p$ is a {\em $q$-separable polynomial} if $p(z)$ is coprime with~$zp_q(z)$.
\end{Definition}

It is clear from Def\/inition~\ref{def.generic} that~0 cannot be a root of a $q$-separable polynomial. Furthermore, the properties of greatest common divisors imply that if $q\neq1$, a non-constant polynomial~$p(z)$ is $q$-separable if and only if it has no common factors with $p(qz)$ other than scalars. In case of $q=1$ in which the $q$-derivative~\eqref{q-derivative} is the usual formal derivative of polynomials, a~$1$-separable polynomial is simply a separable polynomial with a non-zero constant term.

In the following def\/inition the algebras studied in this paper are introduced.

\begin{Definition}\label{def.Weyl}
Let ${\mathbb K}$ be a f\/ield, $q\neq 0$ an element of ${\mathbb K}$, and let $p$ be a polynomial in one variable with coef\/f\/icients from ${\mathbb K}$.

\begin{enumerate}\itemsep=0pt
\item[(1)] The algebra $\AWeyl p q$ is an af\/f\/ine algebra generated by $x_+$, $x_-$, $z_-$, $z_+$ subject to relations:
\begin{subequations}\label{a.weyl}
\begin{gather}
z_+ z_- = z_-z_+, \qquad x_+z_{\pm} = q^{-1}z_\pm x_+ , \qquad x_-z_{\pm} = qz_\pm x_-, \label{a.weyl1}\\
x_+x_- = p(z_+z_-), \qquad x_-x_+ = p\big(q^2z_-z_+\big). \label{a.weyl2}
\end{gather}
\end{subequations}

\item[(2)] The algebra $\Weyl p q$ is an af\/f\/ine algebra generated by $x$, $y$, $z$ subject to relations:
\begin{gather*}
 xz = q^2 z x , \qquad yz = q^{-2}zy, \qquad xy = q^2 zp\big(q^2z\big), \qquad yx = zp(z).
\end{gather*}
\item[(3)] We say that $\AWeyl p q$ (resp.\ $\Weyl p q$) is {\em regular} provided~$p$ is a $q^2$-separable polynomial.
\end{enumerate}
\end{Definition}

Both $\AWeyl p q$ and $\Weyl p q$ are degree-one generalized Weyl algebras \cite{Bav:gweyl}; $\AWeyl p q$ is def\/ined over the ring ${\mathbb K}[z_+, z_-]$, while $\Weyl p q$ extends~${\mathbb K}[z]$.

The algebra $\AWeyl p q$ is a ${\mathbb Z}$-graded algebra with the grading given on the generators by $|z_\pm | =|x_\pm| = \pm 1$. The degree-zero subalgebra ${\AWeyl p q}_0$ is isomorphic to $\Weyl p q$ by the map
\begin{gather}\label{theta}
\Theta\colon \ \Weyl p q \to {\AWeyl p q}_0, \qquad x\mapsto x_-z_+, \qquad y\mapsto z_-x_+, \qquad z\mapsto z_-z_+.
\end{gather}
If ${\mathbb K} ={\mathbb C}$, $q$ is a real number and $p$ has real coef\/f\/icients, then both $\AWeyl p q$ and $\Weyl p q$ can be made into $*$-algebras by setting
\begin{gather}\label{star}
z_-^* = z_+, \qquad x_-^* = x_+; \qquad z^*=z, \qquad x^* =y.
\end{gather}
The map $\Theta$ is then an isomorphism of $*$-algebras. In the case of the ($q^2$-separable) polynomial
\begin{gather*}
p(z) = \prod_{k=0}^{l-1}\big(1- q^{-2\frac{k}l}z\big),
\end{gather*}
the $*$-algebra $\AWeyl p q$ is the coordinate algebra of a quantum lens space~\cite{HonSzy:len}, while $\Weyl p q$ is the coordinate algebra of the quantum teardrop \cite{BrzFai:tea}. Furthermore, if $l=1$, $\AWeyl p q$ is the coordinate algebra of the quantum ${\rm SU}_q(2)$ group \cite{Wor:com}, while $\Weyl p q$ is the standard Podle\'s sphere algebra~\cite{Pod:sph}. Finally, if $p$ is a non-zero constant polynomial, then $x_+$ is invertible with the inverse proportional to $x_-$, hence $\AWeyl p q$ is a $q$-skew Laurent polynomial algebra over the ring of polynomials in two variables. In this case $\Weyl p q$ is the quantum plane (the quantum polynomial ring in two variables).

As explained in \cite[Theorem~3.7]{Brz:gen}, if $p(0)\neq 0$, then $\AWeyl p q$ is a strongly graded algebra, i.e., every homogeneous element of $\AWeyl p q$ of degree $k+l$ can be written as a~linear combination of products of elements of degrees~$k$ and~$l$~\cite{Dad:gro,NasVan:gra}. Equivalently, there exists a mapping (called a~{\em strong connection}~\cite{Haj:str})
\begin{gather*}
\ell \colon \ {\mathbb Z} \to \AWeyl p q \otimes \AWeyl p q,
\end{gather*}
such that, for all $n\in {\mathbb Z}$, $\ell(n) \in \AWeyl p q_{-n} \otimes \AWeyl p q_n$ and the composition of $\ell$ with the multiplication in $\AWeyl p q$ is the constant function $n\mapsto 1 \in \AWeyl p q$. In consequence, all the homogeneous components $\AWeyl p q_{n}$ are projective (in fact invertible) modules over the degree-zero subalgebra identif\/ied with~$\Weyl p q$. These modules are mutually non-isomorphic, provided $q$ is not a root of unity and $p$ has at least one non-zero root. Geometrically all this means that~$\AWeyl p q$ is a~noncommutative principal circle bundle over $\Weyl p q$ and that $\AWeyl p q_{n}$ are (modules of sections of) non-trivial and mutually non-equivalent line bundles over $\Weyl p q$~\cite{BrzMaj:gau}.

\section[Skew derivations on ${\mathcal A}(p;q)$]{Skew derivations on $\boldsymbol{\AWeyl p q}$}\label{sec.skew}
Let ${\mathcal{A}}$ be an algebra and $\sigma$ an algebra automorphism of ${\mathcal{A}}$. We say that a linear map $\partial\colon {\mathcal{A}} \to {\mathcal{A}}$ is a {\em $($right$)$ skew $\sigma$-derivation} or that $(\partial, \sigma)$ is a {\em $($right$)$ skew derivation} if, for all $a,b\in {\mathcal{A}}$,
\begin{gather*}
\partial(ab) = \partial(a) \sigma(b) + a\partial(b).
\end{gather*}

\begin{Theorem}\label{thm.skew}
Let $\AWeyl p q$ be a generalized Weyl algebra as in Definition~{\rm \ref{def.Weyl}}, and let $z:= z_+z_-$. Consider linear maps $\sigma_\pm$, $\sigma_0$ of $\AWeyl p q$ defined on the homogeneous elements $a\in \AWeyl p q$ by
\begin{gather}\label{sigma}
\sigma_\pm(a) = q^{|a|}a, \qquad \sigma_0(a) = q^{2|a|}a,
\end{gather}
and the polynomial
\begin{gather}\label{c}
c(z) : = q p _{q^2}(z) = q\frac{p\big(q^2z\big) -p(z)}{\big(q^2 -1\big)z}.
\end{gather}
Then:
\begin{enumerate}\itemsep=0pt
\item[$(1)$] The maps $\sigma_\pm$, $\sigma_0$ are algebra automorphisms of
$\AWeyl p q$.
\item[$(2)$] For all $\alpha_0 \in {\mathbb K}$, the map $\partial_0$ defined on the generators of $\AWeyl p q$ by
\begin{gather*}
\partial_0(x_+) = \alpha_0 x_+, \qquad \partial_0(x_-) = -q^{-2}\alpha_0 x_-\qquad \partial_0(z_+) = \alpha_0 z_+, \\ \partial_0(z_-) = -q^{-2}\alpha_0 z_-,
\end{gather*}
extends to the whole of $\AWeyl p q$ as a skew $\sigma_0$-derivation.
\item[$(3)$] For all $\alpha_- \in {\mathbb K}$, the map $\partial_-$ defined on the generators of $\AWeyl p q$ by
\begin{gather*}
\partial_-(x_+)=\partial_-(z_+) = 0, \qquad \partial_-(x_-) = \alpha_- c(z) z_+,\qquad \partial_-(z_-) = \alpha_- x_+,
\end{gather*}
extends to the whole of $\AWeyl p q$ as a skew $\sigma_-$-derivation.
\item[$(4)$] For all $\alpha_+ \in {\mathbb K}$, the map $\partial_+$ defined on the generators of~$\AWeyl p q$ by
\begin{gather*}
\partial_+(x_-)=\partial_+(z_-) = 0, \qquad \partial_+(x_+) = \alpha_+ c(z) z_-,\qquad \partial_+(z_+) = \alpha_+ x_-,
\end{gather*}
extends to the whole of $\AWeyl p q$ as a skew $\sigma_+$-derivation.
\item[$(5)$] The maps $\partial_0$, $\partial_{\pm}$ are $q$-skew derivations, i.e.,
\begin{gather}\label{q}
\sigma_0^{-1}\circ \partial_0 \circ \sigma_0 = \partial_0, \qquad \sigma_\pm^{-1}\circ \partial_\pm \circ \sigma_\pm = q^{\pm 2} \partial_\pm.
\end{gather}
\item[$(6)$] The maps $\delta_\pm := \partial_\pm\circ \Theta\colon \Weyl p q \to \AWeyl p q_ {\mp 2}$, where $\Theta$ is the isomorphism~\eqref{theta}, are $\AWeyl p q_{\mp 2}$-valued derivations, while $\partial_0\circ \Theta =0$.
\end{enumerate}
\end{Theorem}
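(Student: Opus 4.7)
The plan is as follows. Part~(1) is immediate from the $\mathbb{Z}$-grading of $\AWeyl p q$: for any non-zero $\lambda\in{\mathbb K}$, the rescaling $a\mapsto \lambda^{|a|}a$ on homogeneous elements defines an algebra automorphism, because $\lambda^{|ab|}=\lambda^{|a|}\lambda^{|b|}$; the maps $\sigma_\pm$ and $\sigma_0$ are the cases $\lambda=q$ and $\lambda=q^2$.

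For parts~(2)--(4) I would follow the standard strategy for extending a map from generators: extend to the free algebra by the twisted Leibniz rule and check that the defining relations \eqref{a.weyl} lie in the kernel. The commutation relations \eqref{a.weyl1} each give a one-line verification in which the $q$-shift prescribed by $\sigma_i$ matches the quasi-commutation factor in the relation. The substantive step is \eqref{a.weyl2}. Writing $z:=z_+z_-$, which has degree zero so that $\sigma_i(z)=z$, one first establishes by induction from \eqref{a.weyl1} the commutations $x_+f(z)=f(q^{-2}z)x_+$ and $x_-f(z)=f(q^2z)x_-$ for any polynomial $f$, together with the twist $\partial_\pm(z)\cdot z=q^{\mp 2}\,z\cdot\partial_\pm(z)$ obtained directly from the definitions. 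Expanding $\partial_\pm$ on a polynomial in $z$ via the twisted Leibniz rule then telescopes to a form proportional to $\partial_\pm(z)\,p_{q^2}(z)$, and the matching between the two sides of each of $x_+x_-=p(z_+z_-)$ and $x_-x_+=p(q^2z_-z_+)$ reduces to the identity $p(q^2z)-p(z)=(q^2-1)z\,p_{q^2}(z)$, which is exactly what the definition \eqref{c} of $c(z)=q\,p_{q^2}(z)$ packages. For $\partial_0$ these relations are in fact trivial: the prefactors $\alpha_0$ and $-q^{-2}\alpha_0$ together with the $\sigma_0$-twist force $\partial_0(z_+z_-)=0$, hence $\partial_0(p(z_+z_-))=0$ and similarly for $p(q^2z_-z_+)$. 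The verification of \eqref{a.weyl2} for $\partial_\pm$ is the main obstacle, and the definition of $c(z)$ is engineered precisely to make it go through.

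Part~(5) is checked on the four generators, where each $\partial_i$ sends a generator to a homogeneous element of degree shift $0$ (for $\partial_0$) or $\mp 2$ (for $\partial_\pm$); the asserted conjugation identity then follows from the explicit scalar form of $\sigma_i$ on homogeneous elements. For part~(6), the key observation is that $\Theta$ takes values in the degree-zero subalgebra $\AWeyl p q_0$, on which each of $\sigma_\pm$ and $\sigma_0$ restricts to the identity; the skew Leibniz rule for $\partial_\pm$ therefore collapses to the ordinary Leibniz rule on $\mathrm{Im}\,\Theta$, so $\delta_\pm=\partial_\pm\circ\Theta$ is a genuine derivation of $\Weyl p q$ with values in $\AWeyl p q_{\mp 2}$ (the image degree being fixed by the shifts recorded in~(5)). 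The vanishing $\partial_0\circ\Theta=0$ is obtained by a direct check on $\Theta(x)=x_-z_+$, $\Theta(y)=z_-x_+$, $\Theta(z)=z_-z_+$ --- the contributions carrying $+\alpha_0$ and $-q^{-2}\alpha_0$ cancel after the $\sigma_0$-twist --- and then extends to all of $\Weyl p q$ because $\partial_0$ restricted to $\AWeyl p q_0$ is an ordinary derivation.
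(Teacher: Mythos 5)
Your route is essentially the paper's: extend each $\partial_i$ by the $\sigma_i$-twisted Leibniz rule and check compatibility with the defining relations, with the $q^2$-derivative identity packaged in $c(z)$ doing the real work; parts (1), (5) and (6) are argued exactly as in the paper (grading, degree shifts of $\partial_i$ combined with the scalar form of $\sigma_i$, and triviality of the twists on the degree-zero subalgebra together with vanishing of $\partial_0$ on $\Theta(x)$, $\Theta(y)$, $\Theta(z)$). Two points in your sketch need correcting, though neither derails the plan. First, it is not true that all the relations \eqref{a.weyl1} reduce to one-line matchings of the $\sigma_i$-shift against the quasi-commutation factor: for $\partial_-$ the relation $x_-z_-=qz_-x_-$ mixes the two non-zero values $\partial_-(x_-)=\alpha_-c(z)z_+$ and $\partial_-(z_-)=\alpha_-x_+$, and verifying it requires substituting $x_+x_-=p(z)$ and $x_-x_+=p\big(q^2z\big)$ and using the precise form \eqref{c} of $c(z)$ (both sides reduce to $\alpha_-\frac{q^2p(q^2z)-p(z)}{q^2-1}$); the same applies to $x_+z_+=q^{-1}z_+x_+$ for $\partial_+$. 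These checks are as substantive as those of \eqref{a.weyl2} and are a second place where the definition of $c(z)$ is essential, so they cannot be dismissed as routine. Second, your auxiliary twist has the exponents flipped: from $\partial_-(z)=\alpha_-z_+x_+$ and $x_+z=q^{-2}zx_+$ one gets $\partial_-(z)\,z=q^{-2}z\,\partial_-(z)$, and similarly $\partial_+(z)\,z=q^{2}z\,\partial_+(z)$, i.e., $\partial_\pm(z)\,z=q^{\pm2}z\,\partial_\pm(z)$ rather than $q^{\mp2}$; with this corrected (and with care about on which side of $\partial_\pm(z)$ the $q^{\pm2}$-derivative of $p$ ends up) the telescoping does produce the paper's formulae, e.g.\ $\partial_-(p(z))=\alpha_-\frac{p(q^{-2}z)-p(z)}{(q^{-2}-1)z}z_+x_+$, and the matching with $\partial_-(x_+x_-)$ and $\partial_-(x_-x_+)$ goes through as you intend. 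You might also note the paper's shortcut for part (4): the anti-automorphism $x_\pm\mapsto x_\mp$, $z_\pm\mapsto z_\mp$ conjugates $\partial_-$ into a multiple of $\partial_+$, so (3) implies (4) without repeating the computation.
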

\begin{proof}
(1) The statement follows immediately from the fact that $\AWeyl p q$ is a graded algebra.

(2) We need to check that the skew derivation property of $\partial_0$ is compatible with rela\-tions~\eqref{a.weyl}. We compute
\begin{align*}
\partial_0(z_-z_+) &= \partial_0(z_-)\sigma_0(z_+) + z_- \partial_0(z_+) = \alpha_0\big({-}q^{-2} \times q^2 + 1\big)z_- z_+ = 0\\
&= \alpha_0\big(q^{-2} - q^{-2}\big) z_+ z_- = \partial_0(z_+)\sigma_0(z_-) + z_+ \partial_0(z_-) =\partial_0(z_+z_-).
\end{align*}
This calculation implies further that, for all polynomials $f$, $\partial_0(f(z)) =0$. Hence
\begin{gather*}
\partial_0 (x_+x_-) = \partial_0(x_+)\sigma_0(x_-) + x_+ \partial_0(x_-) = \alpha_0\big(q^{-2} - q^{-2}\big) x_+ x_- = 0 = \partial_0 (p(z))
\end{gather*}
and
\begin{gather*}
\partial_0(x_-x_+) = \partial_0(x_-)\sigma_0(x_+) + x_- \partial_0(x_+) = \alpha_0\big({-}q^{-2} \times q^2 + 1\big)x_- x_+ = 0 = \partial_0 \big(p\big(q^2z\big)\big).
\end{gather*}
In a similar way one easily f\/inds that
\begin{gather*}
\partial_0(x_-z_+) = 0 = \partial_0(qz_+x_-), \qquad \partial_0(z_-x_+) = 0 = \partial_0(qx_+z_-).
\end{gather*}
Finally,
\begin{gather*}
\partial_0(x_+z_+) = \partial_0(x_+)\sigma_0(z_+) + x_+ \partial_0(z_+) = \alpha_0\big( q^2 + 1\big)x_+ z_+
\end{gather*}
and
\begin{gather*}
\partial_0(z_+x_+) = \partial_0(z_+)\sigma_0(x_+) + z_+ \partial_0(x_+) = \alpha_0\big( q^2 + 1\big)z_+ x_+\\
 \hphantom{\partial_0(z_+x_+)}{} = q\alpha_0\big( q^2 + 1\big)x_+ z_+ = \partial_0(qx_+z_+) .
\end{gather*}
In a similar way the compatibility of the $\sigma_0$-skew derivation property of $\partial_0$ with the last of commutation relations in $\AWeyl p q$, i.e., that $\partial_0(x_-z_-) = \partial_0(q z_-x_-)$ is checked.

(3) As in (2) we need to check that the skew derivation property of $\partial_-$ is compatible with relations \eqref{a.weyl}. We start with
\begin{align*}
\partial_-(x_-z_-) &= \partial_-(x_-)\sigma_-(z_-) + x_- \partial_-(z_-) = \alpha_-\big(q^{-1}c(z)z_+z_- +x_-x_+\big)\\
&= \alpha_-\left(\frac{p\big(q^2z\big) -p(z)}{q^2 -1} +p\big(q^2z\big)\right)= \alpha_-\frac{q^2p\big(q^2z\big) -p(z)}{q^2 -1}.
\end{align*}
On the other hand
\begin{align*}
\partial_-(qz_-x_-) &= q \partial_-(z_-)\sigma_-(x_-) + qz_- \partial_-(x_-) = \alpha_-(x_+x_- + qz_-c(z)z_+)\\
&= \alpha_-\left(p(z) + q^2\frac{p\big(q^2z\big) -p(z)}{q^2 -1} \right) = \alpha_-\frac{q^2p\big(q^2z\big) -p(z)}{q^2 -1} = \partial_-(x_-z_-),
\end{align*}
as required. Clearly,
\begin{gather*}
\partial_-(qx_+z_+) = 0 = \partial_-(z_+x_+).
\end{gather*}
 Next
\begin{align*}
\partial_-(z_-z_+) &= \partial_-(z_-)\sigma_-(z_+) + z_- \partial_-(z_+) = q\alpha_-x_+z_+ = \alpha_-z_+x_+\\
& = \partial_-(z_+)\sigma_-(z_-) + z_+ \partial_-(z_-) = \partial_-(z_+z_-).
\end{align*}
In a similar way one f\/inds
\begin{gather*}
\partial_-(x_-z_+) = q\alpha_-c(z) z_+^2 = \partial_-(qz_+x_-) \qquad \mbox{and} \qquad \partial_-(z_-x_+) = q\alpha_- x_+^2 = \partial_-(qx_+z_-) .
\end{gather*}
Finally, observe that, for all polynomials~$f$,
\begin{gather*}
\partial_-(f(z_-)) = \alpha_-\frac{f\big(q^{-2}z_-\big) -f(z_-)}{\big(q^{-2} -1\big)z_-}x_+,
\end{gather*}
and, since $\partial_-(z_+) =0$,
\begin{gather*}
\partial_-(f(z)) = \alpha_-\frac{f\big(q^{-2}z\big) -f(z)}{\big(q^{-2} -1\big)z}z_+x_+.
\end{gather*}
Thus, in particular,
\begin{gather*}
\partial_-(p(z)) = \alpha_- q^{-1}c\big(q^{-2}z\big)z_+x_+ \qquad \mbox{and} \qquad \partial_-\big(p\big(q^2z\big)\big) = \alpha_- q c(z)z_+x_+.
\end{gather*}
Using these observations one easily f\/inds that
\begin{gather*}
\partial_-(p(z)) = \partial_-(x_+x_-) \qquad \mbox{and} \qquad \partial_-\big(p\big(q^2z\big)\big) = \partial_-(x_-x_+),
\end{gather*}
as required.

(4) This statement is proven in a way analogous to statement (3). Alternatively, one can observe that the assignment $x_\pm \mapsto x_\mp$, $z_\pm\mapsto z_\mp$, def\/ines an anti-algebra automorphism $\varphi$ of ${\mathcal{A}}(p;q)$, under which $\partial_+ \propto \varphi^{-1}\circ \partial_-\circ \varphi$; hence (3) implies~(4).

(5) One can easily check that $\partial_0$ is a degree-zero map, while $\partial_\pm$ are degree $\mp 2$ maps, i.e., for all homogeneous $a$,
\begin{gather*}
|\partial_0(a)| = |a|, \qquad |\partial_\pm(a)| = |a|\mp 2.
\end{gather*}
In view of the form of the automorphisms \eqref{sigma}, equations \eqref{q} follow immediately.

(6) Since the automorphisms $\sigma_\pm$ restricted to the degree-zero subalgebra of $\AWeyl p q$ are the identity maps, the restrictions of skew derivations $\partial_\pm$ to this subalgebra satisfy the usual Leibniz rule, and thus so do the $\delta_\pm$ by the algebra map property of~$\Theta$. Furthermore, since $\partial_\pm$ are degree~$\mp 2$ maps, the codomains of $\delta_\pm$ come out as stated. The calculations of part~(2) together with the (skew) derivation property show that $\partial_0$ restricted to the degree-zero subalgebra of~$\AWeyl p q$ is the zero map, and so is~$\delta_0$.
\end{proof}

\section[Dif\/ferential and integral calculi on ${\mathcal A}(p;q)$]{Dif\/ferential and integral calculi on $\boldsymbol{\AWeyl p q}$}\label{sec.diff.a}
In this section, to the system of skew derivations on $\AWeyl p q$ constructed in Theorem~\ref{thm.skew} we associate an $\AWeyl p q$-bimodule $\Omega$ with a derivation $d\colon \AWeyl p q \to \Omega$, and show that $(\Omega, d)$ forms a~f\/irst-order dif\/ferential calculus for $\AWeyl p q$. We then proceed to discuss the canonical divergence or hom-connection associated to $(\Omega, d)$, and calculate the corresponding integral and integral space. We assume that the constants $\alpha_\pm$, $\alpha_0$ in Theorem~\ref{thm.skew} are not equal to zero.

\begin{Definition}\label{def.diff.calc}
Let ${\mathcal{A}}$ be an algebra. By a~{\em first-order differential calculus} on ${\mathcal{A}}$ we mean an ${\mathcal{A}}$-bimodule $\Omega$ together with a ${\mathbb K}$-linear map $d\colon {\mathcal{A}} \to \Omega$ such that
\begin{enumerate}\itemsep=0pt
\item[(a)] $d$ is an $\Omega$-valued derivation of ${\mathcal{A}}$, i.e., it satisf\/ies the Leibniz rule: for all $a,b\in {\mathcal{A}}$,
\begin{gather*}
 d(ab) = d(a) b + ad(b);
\end{gather*}
 \item[(b)] $\Omega$ satisf\/ies the {\em density condition} $\Omega = {\mathcal{A}} d({\mathcal{A}})$, i.e., for all $\omega \in \Omega$, there exists a f\/inite number of elements $a_i,b_i\in {\mathcal{A}}$ such that
 \begin{gather*}
 \omega = \sum_i a_i d(b_i).
 \end{gather*}
 \end{enumerate}

If ${\mathcal{A}}$ is a complex $*$-algebra, then the calculus $(\Omega, d)$ is said to be a {\em $*$-calculus} provided~$\Omega$ is equipped with an anti-linear operation~$*$ such that, for all $a,b\in {\mathcal{A}}$, $\omega\in \Omega$,
\begin{gather*}
(a\omega b)^* = b^*\omega^* a^* \qquad \mbox{and}\qquad d(a^*) = d(a)^*.
\end{gather*}
\end{Definition}

Note that due to the Leibniz rule, the density condition can be equivalently stated as $\Omega = d({\mathcal{A}}) {\mathcal{A}}$ or $\Omega = d({\mathcal{A}}) {\mathcal{A}} +{\mathcal{A}} d({\mathcal{A}})$.

Following the construction of~\cite{BrzElK:int}, f\/ix a f\/inite indexing set $I$, and let $(\partial_i, \sigma_i)$, $i\in I$, be a~collection of (right) skew derivations on an algebra~${\mathcal{A}}$. Let $\Omega$ be a free left ${\mathcal{A}}$-module with a~free basis $\omega_i$, $i\in I$. Def\/ine the (free) right ${\mathcal{A}}$-module structure on~$\Omega$ by setting
\begin{gather}\label{right}
\omega_i a := \sigma_i(a) \omega_i.
\end{gather}
Then the map
\begin{gather}\label{ext}
d\colon \ {\mathcal{A}} \to \Omega, \qquad a\mapsto \sum_{i\in I} \partial_i(a)\omega_i,
\end{gather}
is an $\Omega$-valued derivation of ${\mathcal{A}}$, i.e., it satisf\/ies condition (a) in Def\/inition~\ref{def.diff.calc}. There is no guarantee in general that the density condition of Def\/inition~\ref{def.diff.calc} be satisf\/ied, though.

\begin{Theorem}\label{thm.calc.A}
Let $\partial_0$, $\partial_\pm$ be skew derivations of a regular generalized Weyl algebra $\AWeyl p q$ defined in Theorem~{\rm \ref{thm.skew}}, with $\alpha_0, \alpha_\pm \neq 0$. Set $I = \{-,0,+\}$, let $\Omega$ be a free left $\AWeyl p q$-module with a basis $\omega_-$, $\omega_0$, $\omega_+$, and equip it with the right module structure~\eqref{right}. If $q^2 \neq 1$ or $\deg(p(z))\leq 1$, then $\Omega$ together with the map $d\colon \AWeyl p q\to \Omega$ defined by~\eqref{ext} is a first-order calculus on~$\AWeyl p q$.
\end{Theorem}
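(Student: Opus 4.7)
The derivation property of $d$ (condition (a) of Definition~\ref{def.diff.calc}) is automatic from the general construction recalled above: each $\partial_i$ being a $\sigma_i$-skew derivation, combined with the twisted right action~\eqref{right}, gives
\begin{align*}
d(ab)=\sum_{i}\bigl(\partial_i(a)\sigma_i(b)+a\partial_i(b)\bigr)\omega_i=\sum_{i}\partial_i(a)\,\omega_i\, b\,+\,a\sum_{i}\partial_i(b)\omega_i=d(a)\,b+a\,d(b)
\end{align*}
for all $a,b\in\AWeyl p q$. So only the density condition requires genuine work; setting $J:=\AWeyl p q\cdot d(\AWeyl p q)$, I must show $\omega_-,\omega_0,\omega_+\in J$.

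The plan is to first compute $d$ on the four generators $x_\pm,z_\pm$ using Theorem~\ref{thm.skew}, then form four combinations in which two of the three $\omega_i$-contributions cancel via the commutation rules $x_\pm f(z)=f(q^{\mp 2}z)x_\pm$:
\begin{gather*}
z_- d(x_-)-q^{-1}x_- d(z_-)=q^{-1}\alpha_- P(z)\,\omega_-,\qquad z_+ d(x_+)-q\,x_+ d(z_+)=q\alpha_+ P(z)\,\omega_+,\\
x_- d(x_+)-q\,c(q^2z)\,z_- d(z_+)=-\alpha_0 P(q^2z)\,\omega_0,\qquad x_+ d(x_-)-q^{-1}c(q^{-2}z)\,z_+ d(z_-)=q^{-2}\alpha_0 P(q^{-2}z)\,\omega_0,
\end{gather*}
where $P(z):=zc(z)/q-p(z)=(p(q^2z)-q^2p(z))/(q^2-1)$. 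This places $P(z)\omega_\pm$ and $P(q^{\pm 2}z)\omega_0$ in $J$. In the case $\deg p\le 1$, a direct computation gives $P(z)=-p(0)$, a nonzero constant by regularity, and all three basis elements lie in $J$ immediately.

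For the remaining case $\deg p\ge 2$ (which forces $q^2\ne 1$), $P$ is nonconstant, but regularity supplies the coprimality $\gcd_{{\mathbb K}[z]}(p(z),zc(z))=1$, hence (via $P=zc/q-p$) the relations $\gcd(P,p)=1$ and $\gcd(p(q^2z),zc(z))=1$. I would leverage the strongly graded structure of $\AWeyl p q$: by \cite[Theorem~3.7]{Brz:gen} the condition $p(0)\ne 0$ makes $\AWeyl p q$ strongly graded with a strong connection satisfying $\sum_i\ell_i^{(-1)}\ell_i^{(1)}=1$ and $\ell_i^{(\mp 1)}\in\AWeyl p q_{\mp 1}$, while a short induction from the generator values shows $\partial_0|_{\AWeyl p q_n}=\alpha_0[n]_{q^2}\cdot\mathrm{id}$ with $[n]_{q^2}:=(q^{2n}-1)/(q^2-1)$. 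Thus
\[
\sum_i\ell_i^{(-1)}d(\ell_i^{(1)})=\alpha_0\,\omega_0+A\,\omega_-+B\,\omega_+\in J
\]
for certain $A\in\AWeyl p q_2$, $B\in\AWeyl p q_{-2}$. Combining this displayed identity with $P(z)\omega_\pm,\,P(q^{\pm 2}z)\omega_0\in J$, the Bezout relations from regularity, and the observation that $d(x_-),d(z_-)\in J$ yield $c(z)z_+\omega_-$ and $x_+\omega_-$ modulo $\AWeyl p q\omega_0$ (symmetrically for $\omega_+$), one forces $\omega_-,\omega_0,\omega_+\in J$ simultaneously.

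The main obstacle I foresee is closing the apparent circle in the last step: each of $\omega_0\in J$ and $\omega_\pm\in J$ is deducible modulo the other, so in the $\deg p\ge 2$ case one must run the two Bezout-based arguments in parallel with the strong-connection identity, while carefully tracking the $q$-shifts that arise when commuting polynomials in $z$ past $x_\pm$. The $\deg p\le 1$ case is by contrast essentially a one-line verification once $P(z)\omega_\pm,P(q^{\pm 2}z)\omega_0\in J$ are in hand, since $P$ is then a nonzero constant.
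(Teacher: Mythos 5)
Your reduction to the density condition is fine, your four cancellation identities are correct (I checked them against \eqref{d.gen} and \eqref{a.weyl}), and your treatment of $\deg(p)\leq 1$ is complete and valid for every $q$: there $P=-p(0)\neq 0$ is a scalar, so all three generators land in $J$ at once, which is in fact a touch more direct than the paper's handling of the linear case. The gap is exactly where you flag it, and it is genuine: in the main case $\deg(p)\geq 2$, $q^2\neq 1$, nothing in the proposal ever places a single $\omega_i$ in $J$. Your identities produce only multiples drawn from the one polynomial family $P$, namely $P(z)\omega_\pm$ and $P(q^{\pm 2}z)\omega_0$; a B\'ezout argument needs \emph{two coprime} multiples of the \emph{same} generator, and no second multiple of $\omega_0$ is produced. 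The strong-connection device does not break the circle either: your mod-$\omega_0$ B\'ezout step gives $\omega_\pm\equiv u_\pm\omega_0 \pmod J$ with $u_-\in\AWeyl p q_{-2}$, $u_+\in\AWeyl p q_{2}$, and substituting into $\alpha_0\omega_0+A\omega_-+B\omega_+\in J$ yields only $(\alpha_0+Au_-+Bu_+)\omega_0\in J$, where $Au_-+Bu_+$ is a non-scalar element of the degree-zero subalgebra $\Weyl p q$; there is no reason this coefficient is left invertible, it need not even lie in ${\mathbb K}[z]$, and no coprimality statement ties it to $P(q^{\pm2}z)$. So ``running the two B\'ezout arguments in parallel with the strong-connection identity'' does not force $\omega_0\in J$ as written.

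The missing idea is to isolate $\omega_0$ \emph{first}, by combinations in which the $\omega_\pm$ parts cancel but which exploit the noncommutativity (rather than same-degree pairing) so as to leave multiples of $\omega_0$ by $z$ and by $p(z)$ instead of by $P$. This is what the paper does: using the twisted right action \eqref{right}, $z_-d(z_+)-d(z_+)z_-=\big(1-q^{-2}\big)\alpha_0\, z\,\omega_0$ (equivalently the left-module combination $z_-d(z_+)+z_+d(z_-)-d(z_+z_-)$), which needs only $q^2\neq1$, gives $z\omega_0\in J$; a second explicit combination, $d(x_+)x_--q^{-1}c(z)z_-d(z_+)+q^2x_+d(x_-)-qz_+d(z_-)c(z)=\big(q^{-2}-1\big)\alpha_0\,p(z)\,\omega_0$, gives $p(z)\omega_0\in J$. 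Since $q^2$-separability forces $p(0)\neq0$, the polynomials $z$ and $p(z)$ are coprime and B\'ezout yields $\omega_0\in J$ with no circularity. Once $\omega_0\in J$, your closing step works verbatim and unconditionally: \eqref{dx-} and \eqref{dz-} give $c(z)z_+\omega_-,\ x_+\omega_-\in J$, left multiplication by $z_-$ and $x_-$ gives two polynomial multiples of $\omega_-$ that are coprime by $q^2$-separability, hence $\omega_-\in J$, and symmetrically $\omega_+\in J$; no strong connection or strongly graded input is needed. So the proposal is repairable, but as it stands the key step of the main case is missing.
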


\begin{proof}
We start by listing explicitly relations that def\/ine $\Omega$ and $d$ on generators. The bimodule structure of $\Omega$ is determined from the relations:
\begin{subequations}\label{omegas}
\begin{alignat}{3}
& \omega_- z_\pm = q^{\pm 1} z_\pm\omega_-, \qquad && \omega_- x_\pm = q^{\pm 1} x_\pm \omega_-,&\label{omega-}
\\
& \omega_0 z_\pm = q^{\pm 2} z_\pm\omega_0, \qquad && \omega_0 x_\pm = q^{\pm 2} x_\pm\omega_0 ,&\label{omega0}
\\
& \omega_+z_\pm = q^{\pm 1} z_\pm\omega_+, \qquad && \omega_+ x_\pm = q^{\pm 1} x_\pm \omega_+. &\label{omega+}
\end{alignat}
\end{subequations}
The action of $d$ on the generators of $\AWeyl p q$ comes out as
\begin{subequations}\label{d.gen}
\begin{gather}
d(x_+) = \alpha_0 x_+ \omega_0 + \alpha_+c(z)z_- \omega_+, \label{dx+}
\\
d(x_-) = -q^{-2}\alpha_0 x_- \omega_0 + \alpha_-c(z)z_+\omega_-, \label{dx-}
\\
d(z_+) = \alpha_0 z_+ \omega_0 + \alpha_+x_- \omega_+, \label{dz+}
\\
d(z_-) = -q^{-2}\alpha_0 z_- \omega_0 + \alpha_-x_+ \omega_-, \label{dz-}
\end{gather}
\end{subequations}
where, as before, we write $z = z_-z_+$ and $c(z)$ is given by \eqref{c}.
We need to prove that
\begin{gather*}
\omega_\pm, \omega_0\in \AWeyl p q d (\AWeyl p q ) = \AWeyl p q d (\AWeyl p q )+ d (\AWeyl p q ) \AWeyl p q .
\end{gather*}
First assume that $q^2\neq 1$. Using relations \eqref{omegas} and \eqref{d.gen} one f\/inds that
\begin{gather*}
z_-d(z_+) - d(z_+)z_- = \big(1-q^{-2}\big)\alpha_0 z\omega_0,
\end{gather*}
and since $\alpha_0 \neq 0$ and $q^2\neq 1$, we conclude that $z\omega_0 \in \AWeyl p q d (\AWeyl p q )$. Again using rela\-tions~\eqref{omegas} and~\eqref{d.gen} one f\/inds that
\begin{gather*}
d(x_+)x_- - q^{-1}c(z)z_-d(z_+) + q^2 x_+d(x_-) - qz_+ d(z_-)c(z) = \big(q^{-2}-1\big)\alpha_0 p(z)\omega_0,
\end{gather*}
hence also $p(z)\omega_0 \in \AWeyl p q d (\AWeyl p q )$. Since the polynomial $p(z)$ is $q^2$-separable it does not have a~factor~$z$, and thus, by the B\'ezout identity there exist polynomials~$f(z)$,~$g(z)$ such that
\begin{gather*}
f(z)z +g(z)p(z) = 1.
\end{gather*}
In the light of this, multiplying from the left $z\omega_0$ by $f(z)$ and $p(z)\omega_0$ by $g(z)$ we obtain that $\omega_0 \in \AWeyl p q d (\AWeyl p q )$, as required.

If $p(z)= \lambda z +\mu$, with scalar coef\/f\/icients $\lambda$ and $\mu\neq 0$ (which is necessary for $p$ to be $q^2$-regular), then in view of \eqref{d.gen}, \eqref{omegas} and \eqref{a.weyl}
\begin{gather*}
x_- d(x_+) - q^2\lambda z_-d(z_+) = \alpha_0\mu \omega_0,
\end{gather*}
and so $\omega_0 \in \AWeyl p q d (\AWeyl p q )$ with no assumptions on $q$.

In view of equations \eqref{d.gen}, also
\begin{gather*}
c(z)z_\pm \omega_\pm,\ x_\pm\omega_\pm \in \AWeyl p q d (\AWeyl p q ).
\end{gather*}
Multiplying the f\/irst pair by $z_\mp$ and the second by $x_\mp$, and using relations \eqref{a.weyl} we conclude that
\begin{gather*}
zc(z)\omega_\pm,\, p(z)\omega_\pm \in \AWeyl p q d (\AWeyl p q ).
\end{gather*}
Since the polynomial $p(z)$ is $q^2$-separable, there is a polynomial combination of~$p(z)$ and~$zc(z)$ that gives~1, and hence $\omega_\pm \in \AWeyl p q d (\AWeyl p q )$. This completes the proof of the density condition and thus also of the theorem.
\end{proof}

\begin{Remark}\label{rem.star}
If $\AWeyl p q$ is a complex $*$-algebra as in \eqref{star}, then the calculus described in Theorem~\ref{thm.calc.A} can be made into a $*$-calculus with the $*$-operation def\/ined on the generators by
\begin{gather*}
\omega^*_{\pm} = \omega_\mp, \qquad \omega_0^* = - \omega_0,
\end{gather*}
provided $\alpha_0 \in {\mathbb R}$ and $\alpha_-^* = q\alpha_+$.
\end{Remark}
\begin{Remark}\label{3d.wor}
Up to rescaling of generators or up to making suitable choices for the parame\-ters~$\alpha_0$,~$\alpha_\pm$, in the case of $p(z)=1-z$, the calculus in Theorem~\ref{thm.calc.A} coincides with the 3D calculus of Woronowicz on the quantum ${\rm SU}_q(2)$-group~\cite{Wor:twi}.
\end{Remark}

\begin{Remark}\label{rem.classical}
If $\deg(p(z))>1$, the arguments used in the proof of Theorem~\ref{thm.calc.A} make signif\/icant use of the fact that $q^2\neq 1$, hence they do not apply to the classical case $q=1$. However, if~$p(z)$ is a non-zero constant or linear polynomial with a non-zero constant term, such as for example in the preceding remark, the assertion of Theorem~\ref{thm.calc.A} remains true if $q=1$. The resulting commutative algebra is the coordinate algebra of a parallelisable manifold (the product of the circle with the plane or the three-sphere), hence the module of sections of the cotangent bundle is free. If $p$ has higher degree, there is no reason why the corresponding manifold should be a~parallelisable three-manifold, and thus no reason to expect its cotangent bundle to be (globally) trivial. In short, one can thus say that the $q$-deformation trivializes the cotangent bundle in this case or that noncommutativity forces triviality.

Note also that the if $q\neq \pm 1$, the assertions of Theorem~\ref{thm.calc.A} hold also for polynomials with repeated roots (as long as they are $q^2$-separable) as the $q^2$-separability of $p(z)$ does not imply that $p(z)$ has no repeated roots (this implication only holds in the case $q^2=1$). If $p(z)$ has repeated roots then both in the commutative and noncommutative cases, the algebra~$\AWeyl p q $ has inf\/inite global dimension~\cite{Bav:gweyl}, and hence it is not homologically smooth in the sense of~\cite{Van:rel}, i.e., it does not admit a f\/inite-length resolution by f\/initely generated and projective bimodules. Yet, it admits a globally def\/ined f\/irst-order dif\/ferential calculus. In the light of arguments for the interpretation of a homologically smooth algebra as a coordinate algebra on a smooth noncommutative space~\cite{Kra:Hoc} the author f\/inds this quite surprising.
\end{Remark}

The notions recalled in the following def\/inition were introduced in \cite{Brz:con} and \cite{BrzElK:int}.
\begin{Definition}\label{def.div}
Let ${\mathcal{A}}$ be an algebra with a f\/irst-order dif\/ferential calculus $(\Omega, d)$. Let ${\mathfrak I}$ denote the space of right ${\mathcal{A}}$-linear maps $\Omega \to {\mathcal{A}}$ equipped with the left and right ${\mathcal{A}}$-actions:
\begin{gather*}
a\cdot f\cdot b(\omega) := af(b\omega), \qquad \mbox{for all}\quad a,b\in {\mathcal{A}}, \quad \omega\in \Omega,\quad f\in {\mathfrak I}.
\end{gather*}
A {\em divergence} or an {\em ${\mathcal{A}}$-valued hom-connection} on ${\mathcal{A}}$ (relative to the f\/irst-order calculus $(\Omega, d)$) is a~linear map
$\operatorname{div} \colon {\mathfrak I} \to {\mathcal{A}}$, such that, for all $a\in {\mathcal{A}}$ and $\xi\in {\mathfrak I}$,
\begin{gather*}
\operatorname{div}(\xi\cdot a) = \operatorname{div}(\xi) a + \xi(d(a)).
\end{gather*}
The cokernel $\operatorname{coker} (\operatorname{div})$ is called the {\em integral space} associated to $\operatorname{div}$ and the cokernel map $\Lambda\colon {\mathcal{A}} \to \operatorname{coker} (\operatorname{div})$ is called the {\em integral} on~${\mathcal{A}}$ associated to~$\operatorname{div}$. The triple $({\mathfrak I}, \operatorname{div},\Lambda)$ is called a~{\em first-order integral calculus} on~${\mathcal{A}}$.
\end{Definition}

\begin{Proposition}\label{prop.int}
Let $(\Omega, d)$ be the first-order differential calculus on a regular generalized Weyl algebra ${\mathcal{A}}= \AWeyl p q$ with the parameter~$q$ that is not a root of unity, as described in Theorem~{\rm \ref{thm.calc.A}}. Let ${\mathfrak I} = \rhom {\mathcal{A}} \Omega {\mathcal{A}}$ and set $n = \deg(p(z))$. Define
\begin{gather}\label{div.A}
\operatorname{div} \colon \ {\mathfrak I} \to {\mathcal{A}}, \qquad \xi \mapsto q^{-2}\partial_-(\xi(\omega_-)) + \partial_0(\xi(\omega_0)) + q^{2}\partial_+(\xi(\omega_+)).
\end{gather}
\begin{enumerate}\itemsep=0pt
\item[$(1)$] The map $\operatorname{div}$ is the unique divergence such that $\operatorname{div}(\xi_i)=0$, $i=-,0,+$, where the $\xi_i$ form the basis for ${\mathfrak I}$ dual to the $\omega_i$, i.e., $\xi_i(\omega_j) = \delta_{ij}$, $i,j\in \{-,0,+\}$.
\item[$(2)$]
The integral $\Lambda$ vanishes on all elements of $\AWeyl p q$ except for the subalgebra ${\mathbb K}[z]$, where it satisfies, for all $f(z) \in {\mathbb K}[z]$,
\begin{gather}\label{lambda.A}
\Lambda\big(q^2 p\big(q^2 z\big)f\big(q^2 z\big)\big) = \Lambda (p(z)f(z) ).
\end{gather}
\item[$(3)$] The integral space associated to $\operatorname{div}$ is $n$-dimensional with a basis $v_i := \Lambda(z^i)$, $i=0,1,\ldots,$ $n-1$. Furthermore, for all $k\in {\mathbb N}$,
\begin{gather}\label{lambda.compact}
\Lambda \big(z^{n+k}\big) = \sum_{i=0}^{n-1} \frac{[i+1]}{[n+k+1]}\beta^k_i v_i,
\end{gather}
where
\begin{gather}\label{q-int}
[l] := \frac{1-q^{2l}}{1-q^2} = 1+ q^2 +\cdots + q^{2l-2}, \qquad l\in {\mathbb N},
\end{gather}
denote $q^2$-integers, and the coefficients $\beta_i^k$ are determined from the recurrence relations as follows. Let $\hat{p}(z)$ be the monic polynomial associated to $p(z)$ and write
\begin{gather}\label{hatp}
\hat{p}(z) = z^n - \sum_{i=0}^{n-1} \mu_i z^i.
\end{gather}
Then
\begin{gather}\label{beta.rec}
\beta_i^k = \sum_{j=1}^n \mu_{n-j}\beta_i^{k-j} +\mu_{i-k}, \qquad \mu_l = \beta^l_i=0 \quad \mbox{if} \quad l<0.
\end{gather}
\end{enumerate}
\end{Proposition}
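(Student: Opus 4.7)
The plan is to handle the three parts sequentially, using the $\mathbb{Z}$-grading of $\AWeyl p q$ throughout.

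For part~(1), I would first observe that $\Omega$ is free as a right $\AWeyl p q$-module (basis $\{\omega_i\}$ with $\omega_i a=\sigma_i(a)\omega_i$), hence ${\mathfrak I}$ is free as a left $\AWeyl p q$-module with dual basis $\{\xi_i\}$ and right action $\xi_i\cdot a=\sigma_i^{-1}(a)\xi_i$. Since $\partial_i(1)=0$, formula~\eqref{div.A} vanishes on each $\xi_i$. The divergence Leibniz rule evaluated on $\xi_i\cdot a$ then reduces, after a short calculation, to the identity $c_i\partial_i(\sigma_i^{-1}(a))=\sigma_i^{-1}(\partial_i(a))$ with $c_\pm=q^{\pm 2}$, $c_0=1$, which is precisely a rewriting of the $q$-skew relations~\eqref{q}. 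Since ${\mathfrak I}=\sum_i\xi_i\cdot\AWeyl p q$, both existence and uniqueness follow.

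For part~(2), a Leibniz induction on PBW monomials gives $\partial_0(a)=\alpha_0[k]a$ on every homogeneous $a$ of degree $k$, where $[k]=(1-q^{2k})/(1-q^2)$. Since $q$ is not a root of unity, $[k]\neq 0$ for $k\neq 0$, so $\mathcal{A}_k\subseteq\operatorname{Im}(\partial_0)\subseteq\operatorname{Im}(\operatorname{div})$ and $\Lambda$ vanishes on every non-zero-degree component. For the degree-zero part I would use the decomposition $\mathcal{A}_0\cong\Weyl p q=\mathbb{K}[z]\oplus\bigoplus_{j\ge 1}(\mathbb{K}[z]\,y^j\oplus\mathbb{K}[z]\,x^j)$ via $\Theta$; the off-diagonal summands are absorbed into $\operatorname{Im}(\operatorname{div})$ by applying $\partial_-$ to $z_-^{j+1}x_+^{j-1}f(z)$ and $\partial_+$ to $z_+^{j+1}x_-^{j-1}f(z)$, whose leading terms are nonzero scalar multiples of $y^jf(z)$ and $x^jf(z)$ respectively, allowing an upper-triangular recursion to cover all such elements. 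The crucial relation~\eqref{lambda.A} then comes from the single identity
\begin{gather*}
\partial_+(z_+x_+f(z))=\frac{q\alpha_+}{q^2-1}\bigl(q^2p(q^2z)f(z)-p(z)f(q^{-2}z)\bigr),
\end{gather*}
derived from the Leibniz rule together with $\partial_+(f(z))=\alpha_+\frac{f(q^2z)-f(z)}{(q^2-1)z}z_-x_-$, $x_+f(z)=f(q^{-2}z)x_+$, and $z_+x_+z_-x_-=q^{-1}zp(z)$; replacing $f(z)$ with $f(q^2z)$ yields \eqref{lambda.A}.

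Part~(3) is then a combinatorial iteration of \eqref{lambda.A}. Writing $p(z)=\lambda\hat{p}(z)$ with $\hat{p}$ as in \eqref{hatp} and applying \eqref{lambda.A} to $f(z)=z^k$ rearranges into
\begin{gather*}
\Lambda(z^{n+k})=\sum_{i=0}^{n-1}\frac{[i+k+1]}{[n+k+1]}\,\mu_i\,\Lambda(z^{i+k}).
\end{gather*}
The case $k=0$ gives $\beta_i^0=\mu_i$. For $k\ge 1$, substituting~\eqref{lambda.compact} inductively into each $\Lambda(z^{i+k})$ with $i+k\ge n$ and matching coefficients of $[j+1]v_j/[n+k+1]$ reproduces exactly the recursion \eqref{beta.rec}; the convention $\mu_l=\beta_i^l=0$ for $l<0$ is the bookkeeping for the direct replacement $\Lambda(z^m)=v_m$ whenever $m<n$.

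The step I expect to be the most delicate is establishing the linear \emph{independence} of $v_0,\ldots,v_{n-1}$, i.e.\ that $\operatorname{Im}(\operatorname{div})\cap\mathbb{K}[z]$ contains no polynomial of degree less than $n$. My approach is to enumerate a $\mathbb{K}$-basis of $\mathcal{A}_{\pm 2}$ inherited from the generalized Weyl structure and to compute the $\mathbb{K}[z]$-component of $\partial_{\mp}$ on each basis element; this component should vanish outside the two exceptional families $z^iz_-x_-$ and $z^iz_+x_+$, both of which produce scalar multiples of the $f(z)=z^i$ instance of \eqref{lambda.A}. A leading-term degree count, using $p(0)\neq 0$ and $q^{2m}\neq 1$ for all $m\ge 1$, then shows that $\{q^2p(q^2z)f(q^2z)-p(z)f(z):f\in\mathbb{K}[z]\}$ spans a subspace of codimension exactly $n$ in $\mathbb{K}[z]$, closing the argument.
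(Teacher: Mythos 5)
Your proposal is correct and follows essentially the same route as the paper: kill the nonzero-degree components with $\partial_0$ (your eigenvalue formula $\partial_0(a)=\alpha_0[|a|]a$ is a clean version of the paper's monomial computations), absorb $x^j{\mathbb K}[z]$ and $y^j{\mathbb K}[z]$ via $\partial_\pm$ applied to what are, up to reordering and scalars, the same elements used in the paper, obtain \eqref{lambda.A} from $\partial_+(z_+x_+f(z))$, and verify \eqref{beta.rec} by the same inductive coefficient matching. The only departures are refinements rather than a different route: you verify part (1) directly from the $q$-skew relations \eqref{q} where the paper cites an external theorem, and you make explicit, via the ${\mathbb K}[z]$-component analysis on basis monomials of $\AWeyl p q_{\pm 2}$ and the degree count on $q^2p\big(q^2z\big)f\big(q^2z\big)-p(z)f(z)$ (whose leading coefficient is a nonzero multiple of a $q^2$-integer since $q$ is not a root of unity), the linear independence of $v_0,\dots,v_{n-1}$ that the paper settles by appealing to the order-$n$ recurrence with $\mu_0\neq 0$.
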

\begin{proof}
(1) Since, by Theorem~\ref{thm.skew}, the calculus $(\Omega ,d)$ is associated to $q$-skew derivations $\partial_i$, the assertion follows from \cite[Theorem~3.4]{BrzElK:int}.

(2) Using the def\/inition of $\partial_0$ and the fact that it is a skew derivation one easily f\/inds that, for all $m >0$,
\begin{gather*}
\partial_0\big(x_+^n\big) = \alpha_0 [n] x_+^n, \qquad \partial_0\big(z_+^n\big) = \alpha_0 [n] z_+^n,
\end{gather*}
where the notation \eqref{q-int} is used, and hence
\begin{gather}\label{0+}
\partial_0\big(x_+^nz_+^m\big) = \alpha_0 [m+n] x_+^nz_+^m.
\end{gather}
In a similar way,
\begin{gather}\label{0-}
\partial_0\big(x_-^nz_-^m\big) = \alpha_0 q^{-2(m+n-1)} [m+n] x_-^nz_-^m.
\end{gather}
A linear basis for $\AWeyl p q$ consists of $x_-^nz_+^mz_-^l$, $x_+^nz_+^mz_-^l$. Since the degree-zero subalgebra~${\mathcal{B}}$ of~$\AWeyl p q$ is isomorphic to $\Weyl p q$, and hence it is generated by $x=x_-z_+$, $y = z_-x_+$, $z=z_-z_+$, for all positive~$k$, the degree $k$-part of $\AWeyl p q$ is generated by
\begin{gather*}
x_+^k, \,x_+^{k-1}z_+,\, \ldots ,\, x_+z_+^{k-1},\, z_+^k,
\end{gather*}
as a ${\mathcal{B}}$-module. Similarly, for a negative $k$, as a ${\mathcal{B}}$-module, the degree $k$ component of~$\AWeyl p q$ is generated by
\begin{gather*}
x_-^{-k},\, x_-^{-k-1}z_-, \,\ldots ,\, x_-z_-^{-k-1},\, z_-^{-k}.
\end{gather*}
By statement (6) of Theorem~\ref{thm.skew}, $\partial_0$ restricted to ${\mathcal{B}}$ is the zero map, and thus, being a skew derivation twisted by an automorphism that is the identity on~${\mathcal{B}}$, it is a ${\mathcal{B}}$-bimodule map. Therefore, in view of formulae~\eqref{0+},~\eqref{0-} and~\eqref{div.A},
\begin{gather}\label{not0}
\bigoplus_{k\in {\mathbb Z}\setminus \{0\}} \AWeyl p q _k \subseteq \operatorname{Im} (\partial_0 )\subseteq \operatorname{Im} (\operatorname{div} )= \ker \Lambda.
\end{gather}

Next, using the def\/inition of $\partial_+$ and the fact that it is a skew derivation, one easily f\/inds that, for all $n>0$,
\begin{gather*}
\partial_+\big(z_+^n\big) = \alpha_+ q^{-n+1} [n] x_-z_+^{n-1}.
\end{gather*}
Since $\partial_+(x_-) = \partial_+(z_-) = 0$, we can compute
\begin{align*}
\partial_+\big(x_-^kz_{+}^{k+l+2}z_-^l\big) &= q^{-l} x_-^k\partial_+\big(z_{+}^{k+l+2}\big)z_-^l = \alpha_+
q^{-k-2l-1}[k+l+2] x_-^{k+1}z_+^{k+1} z_+^lz_-^l\\
&= \alpha_+ q^{-2l +\frac{(k-1)k}2 - 1}[k+l+2] x^{k+1} z^l.
\end{align*}
Therefore, for all $k,l\in {\mathbb N}$,
\begin{gather}\label{notxz}
x^{k+1} z^l \in \operatorname{Im} (\partial_+ )\subseteq \operatorname{Im} (\operatorname{div} ) = \ker \Lambda.
\end{gather}
Arguing in a similar way but with $\partial_-$ instead of $\partial_+$, one f\/inds that
\begin{gather}\label{notyz}
y^{k+1} z^l \in \operatorname{Im}\left(\partial_-\right)\subseteq \operatorname{Im}\left(\operatorname{div}\right) = \ker \Lambda.
\end{gather}
Putting \eqref{not0}, \eqref{notxz} and \eqref{notyz} together we can conclude that $\Lambda$ could be non-zero only on polynomials in~$z$.

In view of the def\/initions of the divergence~\eqref{div.A} and skew derivations~$\partial_\pm$, the only way to obtain a polynomial in $z$ in the image of $\operatorname{div}$ is to apply $\partial_-$ to $x_-z_- f(z)$ or $\partial_+$ to $x_+z_+ g(z)$, where $f(z),g(z)\in {\mathbb K}[z]$. This gives
\begin{gather*}
\partial_-(x_-z_- f(z)) = \frac{\alpha_-}{q^2 -1}\big(q^2 p(q^2 z)f(q^2 z) - p(z)f(z)\big)
\end{gather*}
and
\begin{gather*}
\partial_+(x_+z_+ g(z)) = \frac{\alpha_+}{q^2 -1}\big(q^2 p\big(q^2 z\big)g(z) - p(z)g\big(q^{-2}z\big)\big).
\end{gather*}
Therefore, the only polynomials in $z$ contained in $\operatorname{Im}(\operatorname{div}) = \ker \Lambda$ have the form
\begin{gather*}
q^2 p\big(q^2 z\big)f\big(q^2 z\big) - p(z)f(z), \qquad f(z)\in {\mathbb K}[z].
\end{gather*}
This proves the equation \eqref{lambda.A}.

(3) Equation \eqref{lambda.A} gives a recurrence relation for the values of $\Lambda$ on powers of~$z$. Clearly,~\eqref{lambda.A} remains true if $p$ is replaced by the associated monic $\hat{p}$, and when the latter is written as in~\eqref{hatp}, then
the formula~\eqref{lambda.A}, evaluated at $f(z)= z^k$, $k\in {\mathbb N}$, gives
\begin{gather}
\Lambda\big(z^{n+k}\big) = \frac{1}{q^{2n +2k +2} -1}\sum_{i=0}^{n-1}\big(q^{2k +2i +2} -1\big) \mu_i\Lambda\big(z^{k+i}\big)
 = \sum_{i=0}^{n-1} \frac{[i+k+1]}{[n+k+1]}\mu_i \Lambda\big(z^{k+i}\big).\label{lambda.A.rec}
\end{gather}
As the recurrence relation \eqref{lambda.A.rec} has order~$n$ and $\mu_0\neq 0$, its space of solutions, which coincides with the integral space for~$\Lambda$, is $n$-dimensional as stated. Its basis can be chosen as
\begin{gather*}
v_0 = \Lambda(1),\, v_1 = \Lambda(z), \,\ldots ,\, v_{n-1} = \Lambda\big(z^{n-1}\big).
\end{gather*}
Using \eqref{lambda.A.rec} repeatedly, and observing that $\Lambda(z^{k+i})$ has the $q^2$-integer $[i+k+1]$ in the de\-no\-mi\-nator, which cancels out $[i+k+1]$ in~\eqref{lambda.A.rec} one realizes that the expression for $\Lambda(z^{n+k})$ in terms of the $v_i$ depends on $q^2$-rational numbers precisely as in~\eqref{lambda.compact}. Then the formula~\eqref{beta.rec} for $q$-independent coef\/f\/icients is obtained and verif\/ied inductively.
\end{proof}

\begin{Example}\label{ex.lambda}
Let us consider a regular generalized Weyl algebra $\AWeyl p q$ with~$q$ not a root of unity and
\begin{gather*}
p(z) = z^2 -1.
\end{gather*}
Then $\mu_0 = 1$ and $\mu_1 = 0$ and the recurrence relations \eqref{beta.rec} take the form
\begin{gather*}
\beta^0_0 = \beta^1_1 = 1, \qquad \beta^0_1 = \beta^1_0 = 0, \qquad \beta^k_i = \beta^{k-2}_i, \qquad \mbox{for all} \quad k>1.
\end{gather*}
Thus, by \eqref{lambda.compact} the integral associated to the calculus $\Omega$ is
\begin{gather*}
\Lambda \big(z^k\big) =
\begin{cases} \dfrac{1}{[k +1]} \Lambda (1), & \mbox{if $k$ is even},\vspace{1mm}\\
\dfrac{[2]}{[k +1]} \Lambda (z), & \mbox{if $k$ is odd},
\end{cases}
\end{gather*}
and zero on all other elements of the basis $x_-^nz_+^mz_-^l$, $x_+^nz_+^mz_-^l$ for $\AWeyl p q$.
\end{Example}

\begin{Example}\label{ex.lambda.2}
Let us consider a regular generalized Weyl algebra $\AWeyl p q$ with $q$ not a root of unity and
\begin{gather*}
p(z) = (z -1)^2.
\end{gather*}
Then $\mu_0 = -1$ and $\mu_1 = 2$ and the recurrence relations \eqref{beta.rec} take the form
\begin{gather*}
\beta^0_0 = -1, \!\qquad \beta^0_1 = 2 , \!\qquad \beta^1_0 = -2, \!\qquad \beta^1_1 = 3, \!\qquad \beta^k_i = 2\beta^{k-1}_i - \beta^{k-2}_i, \!\qquad \mbox{for all}
\quad k>1,
\end{gather*}
which have solutions
\begin{gather*}
\beta^k_0 = -(k+1), \qquad \beta^k_1 = k+2.
\end{gather*}
Therefore,
\begin{gather*}
\Lambda \big(z^k\big) = -\frac{k-1}{[k +1]} \Lambda (1)+ \frac{[2] }{[k +1]}k \Lambda (z),
\end{gather*}
and zero on all other elements of the basis $x_-^nz_+^mz_-^l$, $x_+^nz_+^mz_-^l$ for $\AWeyl p q$.
\end{Example}

\section[Dif\/ferential calculus and Dirac operators on ${\mathcal B}(p;q)$]{Dif\/ferential calculus and Dirac operators on $\Weyl p q$}\label{sec.diff.b}
In this section we identify the algebra $\Weyl p q$ with the degree-zero subalgebra of $\AWeyl p q$ via the isomorphism \eqref{theta}, and describe a f\/irst-order dif\/ferential calculus and Dirac operators on~$\Weyl p q$. We assume that the parameters $\alpha_\pm$ in Theorem~\ref{thm.skew} are non-zero.

\begin{Theorem}\label{thm.calc.B}
Let $\AWeyl p q$ be a regular generalized Weyl algebra, with $q$ not a quartic root of unity or otherwise $\deg(p(z)) \leq 1$, and let $(\Omega ,d)$ be the first-order differential calculus defined in Theorem~{\rm \ref{thm.calc.A}}. View $\Weyl p q$ as a degree-zero subalgebra of $\AWeyl p q$ and let
\begin{gather*}
\bar\Omega := \Weyl p q d (\Weyl p q ) \Weyl p q
\end{gather*}
be the calculus on $\Weyl p q$ induced from $(\Omega ,d)$. Then
\begin{gather}\label{bomega}
\bar\Omega \cong \AWeyl p q_{2} \oplus \AWeyl p q_{-2},
\end{gather}
as $\Weyl p q$-bimodules.
\end{Theorem}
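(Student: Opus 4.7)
My first step is to identify the image of $d$ restricted to $\Weyl p q$, viewed as the degree-zero subalgebra $\AWeyl p q_0$ via $\Theta$ from \eqref{theta}. By Theorem~\ref{thm.skew}(6), $\partial_0\circ\Theta = 0$, while $\delta_\pm := \partial_\pm\circ\Theta$ are honest (untwisted) derivations from $\Weyl p q$ into $\AWeyl p q_{\mp 2}$. Hence for $a\in\Weyl p q$, formula \eqref{ext} specializes to $d(a) = \delta_-(a)\omega_- + \delta_+(a)\omega_+$, and no $\omega_0$-component ever arises. Since the automorphisms $\sigma_0,\sigma_\pm$ act trivially on degree-zero elements, relations \eqref{omegas} further show that each $\omega_i$ commutes with every element of $\Weyl p q$. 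Combining these observations,
\[
\bar\Omega \;=\; M_-\omega_- \,\oplus\, M_+\omega_+, \qquad M_\pm \;:=\; \Weyl p q\cdot\delta_\pm(\Weyl p q)\cdot\Weyl p q \,\subseteq\, \AWeyl p q_{\mp 2},
\]
the sum being direct because $\omega_-,\omega_+$ belong to a free left $\AWeyl p q$-basis of $\Omega$. The assignment $(a,b)\mapsto a\omega_- + b\omega_+$ is manifestly a $\Weyl p q$-bimodule isomorphism $M_-\oplus M_+ \to \bar\Omega$, so the theorem reduces to proving $M_\pm = \AWeyl p q_{\mp 2}$.

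For $M_-$, I would compute $\delta_-$ on the generators $x,y,z$ of $\Weyl p q$ via the Leibniz rule for $\partial_-$ and formulae \eqref{d.gen}, obtaining
\[
\delta_-(x) = q\alpha_- c(z)z_+^2, \qquad \delta_-(y) = q\alpha_- x_+^2, \qquad \delta_-(z) = q\alpha_- x_+z_+ ,
\]
so $c(z)z_+^2,\,x_+z_+,\,x_+^2$ all lie in $M_-$. Acting by $x\in\Weyl p q$ on the left of $\delta_-(z)$ and using $x_-x_+ = p(q^2z)$ together with the commutation rules \eqref{a.weyl1}, a short calculation gives
\[
x\cdot\delta_-(z) \;=\; q\alpha_- x_-z_+x_+z_+ \;=\; q^2\alpha_- x_-x_+ z_+^2 \;=\; q^2\alpha_- p(q^2z)z_+^2,
\]
placing $p(q^2z)z_+^2$ in $M_-$ as well. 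The defining identity $p(q^2z)-p(z) = \tfrac{q^2-1}{q}zc(z)$ implies $p(q^2z)\equiv p(z)\pmod{c(z)}$, and $q^2$-separability of $p$ gives $\gcd(p(z),zc(z))=1$, hence $\gcd(p(z),c(z))=1$. Consequently $\gcd(c(z),p(q^2z))=1$ in $\mathbb K[z]$, and a Bézout identity $A(z)c(z)+B(z)p(q^2z)=1$ yields
\[
z_+^2 \;=\; A(z)\cdot (c(z)z_+^2) \,+\, B(z)\cdot (p(q^2z)z_+^2) \;\in\; M_-.
\]

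To finish the $M_-$ side it remains to check that $\{z_+^2, x_+z_+, x_+^2\}$ generates $\AWeyl p q_2$ as a $\Weyl p q$-bimodule. This is routine given the strong gradedness of $\AWeyl p q$: a direct rearrangement using \eqref{a.weyl} shows $\AWeyl p q_1$ is generated as a left (and right) $\Weyl p q$-module by $\{z_+, x_+\}$, whence $\AWeyl p q_2 = \AWeyl p q_1\cdot\AWeyl p q_1 = \Weyl p q z_+^2\Weyl p q + \Weyl p q x_+z_+\Weyl p q + \Weyl p q x_+^2\Weyl p q$, so that $M_- = \AWeyl p q_2$. The equality $M_+ = \AWeyl p q_{-2}$ is the mirror claim: either one runs the analogous computation directly -- starting from $\delta_+(x)=\alpha_+ x_-^2$, $\delta_+(y)=\alpha_+ c(z)z_-^2$, $\delta_+(z)=\alpha_+ z_-x_-$ and extracting $p(z)z_-^2 \in M_+$ from $y\cdot\delta_+(z)=q^{-1}\alpha_+ p(z)z_-^2$ before applying Bézout to $(p(z),c(z))$ -- or one invokes the anti-algebra automorphism $\varphi\colon x_\pm\mapsto x_\mp$, $z_\pm\mapsto z_\mp$ from the proof of Theorem~\ref{thm.skew}(4) to transfer the $M_-$ argument. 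The main obstacle is the B\'ezout step: it hinges on $c(z)$ being nontrivial and on $q^2-1$ being invertible, which is exactly what the hypothesis ``$q^4\neq1$ or $\deg p \leq 1$'' is tailored to secure; in the residual case $\deg p \leq 1$ the polynomial $c(z)$ is a nonzero scalar and $z_+^2 \in M_-$ is recovered directly, without B\'ezout.
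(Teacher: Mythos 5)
Your reduction breaks at its first step. From $d(a)=\delta_-(a)\omega_-+\delta_+(a)\omega_+$ and the fact that $\omega_\pm$ commute with $\Weyl p q$ you may only conclude the inclusion $\bar\Omega\subseteq M_-\omega_-\oplus M_+\omega_+$: a typical element of $\bar\Omega$ is a sum $\sum_i u_i\, d(a_i)\, v_i$, whose $\omega_-$- and $\omega_+$-components are produced by the \emph{same} data $(u_i,a_i,v_i)$ and cannot be prescribed independently. The equality $\bar\Omega= M_-\omega_-\oplus M_+\omega_+$, i.e., the ability to isolate the two components while staying inside $\bar\Omega$, is precisely the substance of the theorem, and it is the part of the paper's proof that consumes the hypothesis $q^4\neq 1$: for instance $z\,d(x)-d(x)\,z=\big(1-q^4\big)\alpha_+ z x_-^2\omega_+$ as in \eqref{zdx} (the $\omega_-$-component $c(z)z_+^2$ of $d(x)$ commutes with $z$, while the $\omega_+$-component $x_-^2$ picks up $q^4$), which places the pure term $zx_-^2\omega_+$ in $\bar\Omega$, and then \eqref{xdz} and \eqref{dzx} yield $zc(z)z_+^2\omega_-$ and $p(z)z_+^2\omega_-$, after which the B\'ezout argument applies; the case $\deg(p(z))\leq 1$ is handled by different explicit combinations achieving the same separation. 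Your computations of $\delta_\pm$ on $x$, $y$, $z$, the extraction of $p\big(q^2z\big)z_+^2$ via $x\cdot\delta_-(z)$, and the coprimality argument are correct, but they only establish $M_\pm=\AWeyl p q_{\mp2}$, i.e., an upper bound for $\bar\Omega$, not the isomorphism \eqref{bomega}.

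Your closing diagnosis confirms the gap: the B\'ezout step does not need $q^4\neq1$ at all, since coprimality of $p(z)$ with $zc(z)$ is exactly $q^2$-separability, which is the standing regularity assumption (and holds, e.g., when $q^2=-1$, a case your argument would then appear to cover although no component separation has been established there). The hypothesis ``$q$ not a quartic root of unity or $\deg(p(z))\leq 1$'' must be spent on showing that the individual elements $z_+^2\omega_-$, $x_+^2\omega_-$, $z_+x_+\omega_-$, $x_-^2\omega_+$, $z_-^2\omega_+$, $z_-x_-\omega_+$ actually belong to $\bar\Omega$, not on the polynomial identity. (A minor additional slip: for constant $p$ the polynomial $c(z)$ is zero, not a nonzero scalar, so even your residual case needs the argument via $p\big(q^2z\big)z_+^2$ rather than via $c(z)z_+^2$.)
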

\begin{proof}
Since $\omega_\pm$ are generators of degrees $\pm 2$, commuting with elements of $\Weyl p q$, and $\Weyl p q$ $\subseteq \ker(\partial_0)$,
\begin{gather*}
\bar\Omega \subseteq \AWeyl p q_{2}\,\omega_- \oplus \AWeyl p q_{-2}\,\omega_+.
\end{gather*}

As a $\Weyl p q$-module, $ \AWeyl p q_{2}$ is generated by $z_+^2$, $x_+^2$, $z_+x_+$ while $ \AWeyl p q_{-2}$ is generated by~$z_-^2$, $x_-^2$, $z_-x_-$, hence to show that the above inclusion is an equality, it suf\/f\/ices to prove that
\begin{gather*}
z_+^2\omega_- ,\, x_+^2\omega_-,\, z_+x_+\omega_-,\,x_-^2 \omega_+,\, z_-^2\omega_+,\, z_-x_- \omega_+ \in \bar\Omega.
\end{gather*}

First we consider the case $q^4\neq 1$. Using the def\/inition of $d$ in terms of skew derivations~$\partial_\pm$ and~$\partial_0$ or using equations~\eqref{d.gen} one easily f\/inds that
\begin{subequations}\label{d.gen.B}
\begin{gather}
d(x) = q\alpha_-c(z)z_+^2\omega_- + \alpha_+x_-^2 \omega_+, \label{dx}
\\
d(y) = q\alpha_-x_+^2\omega_- + \alpha_+c(z)z_-^2\omega_+ , \label{dy}
\\
d(z) = \alpha_-z_+x_+\omega_- + \alpha_+z_-x_- \omega_+. \label{dz}
\end{gather}
\end{subequations}
Therefore, since the generators $\omega_\pm$ commute with all elements of $\Weyl p q$, we conclude that
\begin{subequations}\label{xdz+zdx}
\begin{gather}
zd(x) - d(x)z = \big(1-q^4\big)\alpha_+ zx_-^2 \omega_+, \label{zdx}
\\
zd(x) = q\alpha_-zc(z)z_+^2\omega_- + \alpha_+zx_-^2 \omega_+, \label{xdz}
\\
d(z)x = \alpha_-p(z)z_+^2\omega_- + q^2\alpha_+zx_-^2 \omega_+. \label{dzx}
\end{gather}
\end{subequations}
Equation \eqref{zdx} implies that $zx_-^2 \omega_+\in \bar\Omega$, hence also $zc(z)z_+^2\omega_- , p(z)z_+^2\omega_- \in \bar\Omega$, by~\eqref{xdz} and~\eqref{dzx}. Since $p(z)$ is $q^2$-separable, the latter implies that $z_+^2\omega_- \in \bar\Omega$ and, in view of~\eqref{dx}, $x_-^2\omega_+\in \bar\Omega$. In a similar way one f\/inds that
\begin{subequations}\label{ydz+zdy}
\begin{gather}
zd(y) - d(y)z = \big(q-q^{-3}\big)\alpha_- zx_+^2 \omega_-, \label{zdy}
\\
yd(z) = q^{-1}\alpha_-zx_+^2\omega_- + q^{-1}\alpha_+p(z)z_-^2 \omega_+, \label{ydz}
\\
zd(y) = q\alpha_-zx_+^2\omega_- + \alpha_+zc(z)z_-^2 \omega_+. \label{dzy}
\end{gather}
\end{subequations}
By \eqref{zdy}, $zx_+^2 \omega_-\in \bar\Omega$, hence also $zc(z)z_-^2\omega_+, p(z)z_-^2\omega_+ \in \bar\Omega$, by \eqref{ydz} and \eqref{dzy}. Since $p(z)$ is $q^2$-separable, the latter implies that $z_-^2\omega_+ \in \bar\Omega$ and, in view of \eqref{dy}, $x_+^2\omega_-\in \bar\Omega$. Finally,
\begin{gather*}
\alpha_+ p(z)z_-x_-\omega_+ = yd(x) - \frac{q^{-1}c\big(q^{-2}z\big)}{1-q^{-4}}\big(zd(z) - q^{-2}d(z)z\big),
\end{gather*}
and
\begin{gather*}
\alpha_+ zz_-x_-\omega_+ =\frac{1}{1-q^{4}}\big(zd(z) - q^{2}d(z)z\big),
\end{gather*}
hence $z_-x_-\omega_+ \in \bar \Omega$, since the polynomial $p(z)$, being $q^2$-separable, does not contain the factor~$z$. In view of~\eqref{dz},
also $z_+x_+\omega_- \in \bar \Omega$.

If $p(z) = \lambda z +\mu$, $\lambda, \mu\in {\mathbb K}$, $\mu\neq 0$, then equations~\eqref{d.gen.B} and the fact that generators $\omega_\pm$ commute with all elements of $\Weyl p q$ yield
\begin{gather*}
\alpha_-\mu z_+^2\omega_- = d(z) x - q^{-2}d(x)z, \qquad \alpha_+\mu z_-^2\omega_+ = qyd(z) - q^{-1}z d(y)
\end{gather*}
and
\begin{gather*}
\alpha_-\mu z_+x_+\omega_- = q^{-2}x d(y) - q^2 \lambda z d(z),
\end{gather*}
hence also in this case all the required forms are in $\bar{\Omega}$.
Since $\omega_\pm$ are ($\Weyl p q$-central) free generators, we obtain the required isomorphism of $\Weyl p q$-bimodules \eqref{bomega}.
\end{proof}

Theorem~\ref{thm.calc.B} shows that the $\Weyl p q$-bimodule $\AWeyl p q_{2} \oplus \AWeyl p q_{-2}$ plays the role of sections of the cotangent bundle over the quantum surface with the coordinate algebra $\Weyl p q$. The direct sum decomposition on the right hand side of \eqref{bomega} can be interpreted as the decomposition of one-forms into holomorphic and anti-holomorphic parts. This is particularly justif\/ied in case~$\AWeyl p q$ and $\Weyl p q$ are equipped with the $*$-algebra structure~\eqref{star} and~$\Omega$ (and hence also~$\bar\Omega$) is a~$*$-calculus as in Remark~\ref{rem.star}. Following this, we write
\begin{gather*}
\bar\Omega^{1,0} = \AWeyl p q _{-2} \omega_+, \qquad \bar\Omega^{0,1} = \AWeyl p q _{2} \omega_-.
\end{gather*}
Theorem~\ref{thm.calc.B} shows further that the module of horizontal forms
\begin{gather*}
\Omega_{\mathrm{hor}} := \AWeyl p q d (\Weyl p q ) \AWeyl p q,
\end{gather*}
of the principal circle bundle $\AWeyl p q$ over $\Weyl p q$ is generated by $\omega_\pm$. The one-form $\omega_0$ generates vertical forms. We write $\pi$ for the projection of $\Omega$ onto $\Omega_{\mathrm{hor}}$. In particular, for all $a\in \AWeyl p q$,
\begin{gather*}
\pi(d(a)) = \partial_+(a)\omega_+ + \partial_-(a)\omega_-.
\end{gather*}

We construct a Dirac operator on $\Weyl p q$ by following the procedure of Beggs and Majid~\cite{BegMaj:spe}, which, in this particular case, is the generalization of a method employed in~\cite{Maj:Rie}. To start with, we identify the sections of a spinor bundle with the $\Weyl p q$-bimodule
$\AWeyl p q_{1} \oplus \AWeyl p q_{-1}$. More precisely, we set
\begin{gather}\label{spinor}
\mathcal{S}_+ = \AWeyl p q_{-1}{\mathsf{s}}_+, \qquad \mathcal{S}_- = \AWeyl p q_{1}{\mathsf{s}}_-, \qquad \mathcal{S} = \mathcal{S}_+\oplus \mathcal{S}_-,
\end{gather}
where ${\mathsf{s}}_\pm$ are formally understood as (central, i.e., commuting with elements of $\Weyl p q$) generators that distinguish components of the direct sum. In contrast to the cotangent bundle, the spinor bundle is trivial.

\begin{Lemma}\label{lemma.spinor}
The left $($resp.\ right$)$ $\Weyl p q$-module $\AWeyl p q_{1} \oplus \AWeyl p q_{-1}$ is free.
\end{Lemma}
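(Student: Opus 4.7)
I prove the left-module assertion; the right-module case then follows by the anti-algebra automorphism $\varphi\colon x_\pm\mapsto x_\mp$, $z_\pm\mapsto z_\mp$ of $\AWeyl p q$, which swaps $\AWeyl p q_{1}$ and $\AWeyl p q_{-1}$. The strategy is to exhibit a matrix $M\in M_2(\AWeyl p q)$ whose first column lies in $\AWeyl p q_{1}^{\oplus 2}$ and whose second column lies in $\AWeyl p q_{-1}^{\oplus 2}$, together with a two-sided inverse $N\in M_2(\AWeyl p q)$ whose first row lies in $\AWeyl p q_{-1}^{\oplus 2}$ and whose second row lies in $\AWeyl p q_{1}^{\oplus 2}$. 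Given such $M$ and $N$, the left $\Weyl p q$-linear maps $(\alpha,\beta)\mapsto(\alpha,\beta)M$ and $(a,b)\mapsto(a,b)N$ have the claimed domains and targets purely from the degree pattern of the columns and rows, and are mutually inverse once $MN=NM=I_{2}$, establishing freeness of $\AWeyl p q_{1}\oplus\AWeyl p q_{-1}$ of rank two.

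To construct $M$ and $N$, use regularity: since $p(0)\neq 0$, the polynomials $z$ and $p(z)$ are coprime in $\mathbb K[z]$ and B\'ezout furnishes $\tilde f,\tilde g\in\mathbb K[z]$ with $\tilde f(z)\,z+\tilde g(z)\,p(z)=1$. Writing $z:=z_+z_-$, set
\begin{gather*}
M \;=\; \begin{pmatrix} z_+ & x_- \\ -q^{-1}\tilde g(z)\,x_+ & \tilde f(z)\,z_- \end{pmatrix}, \qquad
N \;=\; \begin{pmatrix} q^{2}\tilde f(q^{2}z)\,z_- & -q\,x_- \\ \tilde g(z)\,x_+ & z_+ \end{pmatrix}.
\end{gather*}
The identities $MN=I_2$ and $NM=I_2$ then follow from a direct entry-by-entry check using~\eqref{a.weyl}: specifically $x_+x_-=p(z)$, $x_-x_+=p(q^{2}z)$, $z_\pm x_\mp=q^{\mp 1}x_\mp z_\pm$, and the commutation rule $\psi(z)\,x_\pm=x_\pm\,\psi(q^{\pm 2}z)$ for any $\psi\in\mathbb K[z]$, together with the B\'ezout identity applied at both $z$ and $q^{2}z$. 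The off-diagonal entries of $MN$ and $NM$ cancel through these commutation relations, while each diagonal entry reduces directly to a B\'ezout identity.

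The principal obstacle is locating the ansatz: in the absence of a determinantal formula for matrices over the noncommutative ring $\AWeyl p q$, the inverse pair $(M,N)$ has to be written down by hand. The form above is the natural B\'ezout-twisted generalisation of the defining unitary of quantum $SU_q(2)$ (recovered in the case $p(z)=1-z$, $\tilde f=\tilde g=1$), and it mirrors the approach employed in~\cite{Maj:Rie} for the standard Podle\'s sphere.
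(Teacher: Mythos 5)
Your argument is correct, and the matrices do invert: the diagonal entries of $MN$ and of $NM$ reduce to the B\'ezout identity evaluated at $q^{2}z$ and at $z$ respectively, while the off-diagonal entries cancel using $x_\pm z_\mp = q^{\mp1}z_\mp x_\pm$ and $\psi(z)x_\pm=x_\pm\psi(q^{\pm2}z)$, exactly as you indicate; the degree pattern of the columns of $M$ and rows of $N$ then makes $(\alpha,\beta)\mapsto(\alpha,\beta)M$ and $(a,b)\mapsto(a,b)N$ mutually inverse left $\Weyl p q$-linear maps between $\Weyl p q^{2}$ and $\AWeyl p q_{1}\oplus\AWeyl p q_{-1}$. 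The route differs from the paper's mainly in packaging. The paper writes down strong connection values $\ell(\pm1)$ --- which amount to the particular B\'ezout choice $\tilde g(z)=1/p(0)$, $\tilde f(z)=\bigl(p(0)-p(z)\bigr)/\bigl(p(0)z\bigr)$ --- invokes the Chern--Galois machinery of \cite{BrzHaj:Che} to obtain idempotents $e(\pm1)\in M_2(\Weyl p q)$ representing $\AWeyl p q_{\pm1}$ as one-sided projective modules, and concludes from $e(1)+e(-1)=I_2$ that the direct sum of the images of two complementary idempotents is free. Your version is more self-contained (no appeal to the strong-connection/idempotent formalism, only the relations \eqref{a.weyl} and coprimality of $z$ and $p(z)$ coming from $p(0)\neq0$) and it exhibits an explicit free basis, which is convenient later when working with the spinor bimodule; the cost is the hand verification of $MN=NM=I_2$, and the benefit of the paper's formulation is that the idempotent picture yields the left and right statements simultaneously and ties the lemma to the principal-bundle structure used throughout. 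One small simplification available to you: the same pair $(M,N)$ proves the right-module case directly by acting on column vectors, $v\mapsto Nv$ and $w\mapsto Mw$, so the (correct but tersely justified) detour through the anti-automorphism $\varphi$ is not actually needed.
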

\begin{proof}
Def\/ine
\begin{gather*}
\ell(1) := \frac{1}{p(0)} \left(\frac{p(0) - p\big(q^2 z\big)}{z} z_-\otimes z_+ + (-qx_-)\otimes \big({-}q^{-1}x_+\big)\right)\in \AWeyl p q_{-1}\otimes \AWeyl p q_{1}, \\
\ell(-1) := \frac{1}{p(0)} \left({z_+} \otimes \frac{p(0) - p(z)}{z} z_- + x_+\otimes x_-\right)\in \AWeyl p q_{1}\otimes \AWeyl p q_{-1} .
\end{gather*}
Relations \eqref{a.weyl} immediately imply that the multiplication applied to both $\ell(1)$ and $\ell(-1)$ gives~1, hence~$\ell(1)$ and $\ell(-1)$ are values of a strong connection in $\AWeyl p q$. Therefore, as explained in~\cite{BrzHaj:Che}, an idempotent corresponding to the projective module $\AWeyl p q_{1} $ can be computed as
\begin{gather*}
e(1) = \frac{1}{p(0)}\begin{pmatrix}
p(0) - p\big(q^2 z\big) & -qz_+x_- \vspace{1mm}\\
-q^{-1}x_+ \dfrac{p(0) - p\big(q^2 z\big)}{z} z_- & x_+x_-
\end{pmatrix}
=
\frac{1}{p(0)}\begin{pmatrix}
p(0) - p\big(q^2 z\big) & -x \vspace{1mm}\\
- \dfrac{p(0) - p(z)}{z} y &p(z)
\end{pmatrix},
\end{gather*}
while an idempotent of $\AWeyl p q_{-1} $, as
\begin{gather*}
e(-1) = \frac{1}{p(0)}\begin{pmatrix}
x_-x_+ & x_-z_+ \vspace{1mm}\\
 \dfrac{p(0) - p(z)}{z} z_-x_+ & p(0)-p(z)
\end{pmatrix}
=
\frac{1}{p(0)}\begin{pmatrix}
p\big(q^2 z\big) & x \vspace{1mm}\\
 \dfrac{p(0) - p(z)}{z} y & p(0)-p(z)
\end{pmatrix}.
\end{gather*}
Clearly,
\begin{gather*}
e(1)+e(-1) = \begin{pmatrix}
1 & 0\\ 0 &1 \end{pmatrix},
\end{gather*}
which proves the assertion.
\end{proof}

The strong connection forms $\ell(1)$, $\ell(-1)$ listed in the proof of Lemma~\ref{lemma.spinor} def\/ine a connection $\nabla\colon \mathcal{S} \to \bar\Omega \otimes \mathcal{S}$ on the spinor bundle $\mathcal{S}$ by the formula
\begin{gather*}
\nabla(a {\mathsf{s}}_+ + b {\mathsf{s}}_-) = \pi(d(a))\ell(-1) {\mathsf{s}}_+ + \pi(d(b))\ell(1) {\mathsf{s}}_-,
\end{gather*}
for all $a$ of degree $-1$ and $b$ of degree~1. Explicitly,
\begin{gather}
\nabla(a {\mathsf{s}}_+ + b {\mathsf{s}}_-) = \frac{q}{p(0)} \left(\partial_+(a){z_+}\omega_+ \otimes \frac{p(0) - p(z)}{z} z_-{\mathsf{s}}_+ + \partial_+(a)x_+\omega_+\otimes x_-{\mathsf{s}}_+ \right)\nonumber \\
\hphantom{\nabla(a {\mathsf{s}}_+ + b {\mathsf{s}}_-) =}{} + \frac{q}{p(0)} \left(\partial_-(a){z_+}\omega_- \otimes \frac{p(0) - p(z)}{z} z_-{\mathsf{s}}_+ + \partial_-(a)x_+\omega_-\otimes x_-{\mathsf{s}}_+ \right)\nonumber \\
 \hphantom{\nabla(a {\mathsf{s}}_+ + b {\mathsf{s}}_-) =}{}+ \frac{q^{-1}}{p(0)} \left(\partial_+(b){z_-}\frac{p(0) - p\left(q^2z\right)}{z} \omega_+ \otimes z_+{\mathsf{s}}_- + \partial_+(b)x_+\omega_+\otimes x_+{\mathsf{s}}_- \right)\nonumber \\
\hphantom{\nabla(a {\mathsf{s}}_+ + b {\mathsf{s}}_-) =}{} + \frac{q^{-1}}{p(0)} \left(\partial_-(b){z_-}\frac{p(0) - p\left(q^2z\right)}{z} \omega_- \otimes z_+{\mathsf{s}}_- + \partial_-(b)x_+\omega_-\otimes x_+{\mathsf{s}}_- \right) .\label{conn}
\end{gather}
The Clif\/ford action $\triangleright$ of $\bar\Omega$ on $\mathcal{S}$ is def\/ined, for all $a,b,c_\pm \in \AWeyl p q$ of degrees $|a| = -1$, $|b| =1$, $|c_\pm| = \pm 2$, by
\begin{gather}\label{Clifford}
(c_-\omega_+ + c_+\omega_-)\triangleright (a {\mathsf{s}}_+ + b {\mathsf{s}}_-) = \beta_+ c_-b {\mathsf{s}}_+ + \beta_- c_+a {\mathsf{s}}_-,
\end{gather}
where $\beta_+$, $\beta_-$ are (for the time being arbitrary) elements of the f\/ield ${\mathbb K}$. A connection together with a Clif\/ford action def\/ine the Dirac operator
\begin{gather*}
D := \triangleright \circ \nabla \colon \ \mathcal{S} \to \mathcal{S}.
\end{gather*}
Using relations \eqref{a.weyl}, the Dirac operator corresponding to data \eqref{conn}, \eqref{Clifford}, can be computed as
\begin{gather}\label{Dirac}
D(a {\mathsf{s}}_+ + b {\mathsf{s}}_-) = \beta_+ q^{-1}\partial_+(b) {\mathsf{s}}_+ + \beta_- q \partial_-(a) {\mathsf{s}}_- .
\end{gather}
$D$ is an even Dirac operator with the grading $\Weyl p q$-bimodule map def\/ined by
\begin{gather}\label{grading}
\gamma\colon \ \mathcal{S} \to \mathcal{S}, \qquad a {\mathsf{s}}_+ + b {\mathsf{s}}_- \longmapsto a {\mathsf{s}}_+ - b {\mathsf{s}}_-,
\end{gather}
since
\begin{align*}
D\circ \gamma (a {\mathsf{s}}_++b {\mathsf{s}}_-) &= D(a {\mathsf{s}}_+-b {\mathsf{s}}_-) = -\beta_+ q^{-1}\partial_+(b) {\mathsf{s}}_+ + \beta_- q \partial_-(a) {\mathsf{s}}_-\\
& = -\gamma(\beta_+ q^{-1}\partial_+(b) {\mathsf{s}}_+ + \beta_- q \partial_-(a) {\mathsf{s}}_-) = - \gamma\circ D(a {\mathsf{s}}_++b {\mathsf{s}}_-),
\end{align*}
i.e., $D$ anti-commutes with $\gamma$ as required.

\begin{Proposition}\label{prop.real}
Let ${\mathbb K} = {\mathbb C}$, $q\in (0,1)$ and $p$ be a $q^2$-separable polynomial with real coefficients. Equip the complex algebra $\AWeyl p q$ with the $*$-algebra structure as in~\eqref{star} and let $\Omega$ be the first-order calculus constructed in Theorem~{\rm \ref{thm.calc.A}} with the $*$-structure as in Remark~{\rm \ref{rem.star}}. View $\Weyl p q$ as a $*$-subalgebra of $\AWeyl p q$, let $\mathcal{S}$ be the module of sections of a spinor bundle \eqref{spinor}, and let $D$ be the Dirac operator~\eqref{Dirac} with the grading $\gamma$ given by~\eqref{grading}. Choose $\beta_\pm$ such that $\beta_-^*/\beta_+<0$, and let $\nu$ be a solution to
the equation
\begin{gather}\label{constraint}
\nu^2 = -q^3\frac{\beta_-^*}{\beta_+}.
\end{gather}
Then the linear map
\begin{gather*}
J \colon \ \mathcal{S} \to \mathcal{S}, \qquad a {\mathsf{s}}_++b {\mathsf{s}}_- \longmapsto -\nu^{-1} b^*{\mathsf{s}}_+ + \nu a^*{\mathsf{s}}_-,
\end{gather*}
equips $D$ with a real structure such that $D$ has ${\rm KO}$-dimension two.
\end{Proposition}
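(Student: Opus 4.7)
The plan is to verify the antilinearity of $J$ together with the three sign relations that characterise a real spectral triple of KO-dimension two modulo eight, namely $J^2 = -\mathrm{id}$, $JD = DJ$ and $J\gamma = -\gamma J$. As a preliminary, I would note that $\nu$ is a well-defined real number: since $q \in (0,1)$ and $\beta_-^*/\beta_+ < 0$, the right-hand side of \eqref{constraint} is a positive real number, so a real solution $\nu$ exists. Antilinearity of $J$ is visible directly from its defining formula through the two occurrences of the involution $^*$.

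The relation $J^2 = -\mathrm{id}$ follows by applying $J$ twice and using $\nu \in \mathbb{R}$ together with involutivity of $^*$: the scalar coefficients in front of both components collapse to $-1$. The anticommutation $J\gamma = -\gamma J$ is immediate from the definitions since $\gamma$ acts as $+1$ on $\mathcal{S}_+$ and $-1$ on $\mathcal{S}_-$, while $J$ swaps the two components (up to scalar factors); a direct substitution confirms the sign.

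The main step is $JD = DJ$. For this I would first derive how the $*$-structure interacts with the skew derivations $\partial_\pm$. Applying $^*$ to
\begin{gather*}
d(a) = \partial_+(a)\omega_+ + \partial_0(a)\omega_0 + \partial_-(a)\omega_-,
\end{gather*}
using $\omega_\pm^* = \omega_\mp$ and $\omega_0^* = -\omega_0$, moving each $\omega_i$ back to the right via the bimodule relations \eqref{omegas}, and identifying the result with $d(a^*)$ yields, for homogeneous $a \in \AWeyl p q$,
\begin{gather*}
(\partial_-(a))^* = q^{|a|+2}\partial_+(a^*), \qquad (\partial_+(a))^* = q^{|a|-2}\partial_-(a^*).
\end{gather*}
These identities implicitly use the condition $\alpha_-^* = q\alpha_+$ from Remark~\ref{rem.star}. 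Specialising to $|a|=-1$ and $|b|=1$ gives $(\partial_-(a))^* = q\,\partial_+(a^*)$ and $(\partial_+(b))^* = q^{-1}\partial_-(b^*)$. Inserting these into \eqref{Dirac} and into the formula for $J$ and comparing $JD(a{\mathsf{s}}_+ + b{\mathsf{s}}_-)$ with $DJ(a{\mathsf{s}}_+ + b{\mathsf{s}}_-)$ component-wise, the ${\mathsf{s}}_+$-component matches precisely when $\nu^2 = -q^3\beta_-^*/\beta_+$, i.e.\ when \eqref{constraint} holds, while the ${\mathsf{s}}_-$-component matches when $\nu^2 = -q^3\beta_-/\beta_+^*$.

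The two scalar equations are compatible because the hypothesis $\beta_-^*/\beta_+ < 0$ forces $\beta_-\beta_+$ to be real: writing $\beta_-^* = r\beta_+$ with $r<0$ gives $\beta_-\beta_+ = r|\beta_+|^2 \in \mathbb{R}$, so $(\beta_-\beta_+)^* = \beta_-\beta_+$ and the two constraints collapse to the single equation \eqref{constraint}. The resulting triple of signs $\epsilon = -1$, $\epsilon' = +1$, $\epsilon'' = -1$ characterises KO-dimension two modulo eight. The main technical obstacle is the derivation of the $*$-identities for $\partial_\pm$; everything else is a direct calculation, with the sign hypothesis on $\beta_\pm$ serving as the glue that renders the two component-wise matching conditions on $\nu$ consistent and equivalent to \eqref{constraint}.
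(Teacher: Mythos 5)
Your calculations of the sign relations are correct and follow essentially the paper's own route: you derive the identities $\partial_+(c)^*=q^{|c|-2}\partial_-(c^*)$ and $\partial_-(c)^*=q^{|c|+2}\partial_+(c^*)$ exactly as the paper does (by applying $*$ to $d$, using $\omega_\pm^*=\omega_\mp$, $\omega_0^*=-\omega_0$ and the bimodule relations, with $\alpha_-^*=q\alpha_+$ in the background), and your observation that the ${\mathsf{s}}_+$- and ${\mathsf{s}}_-$-components give the two conditions $\nu^2=-q^3\beta_-^*/\beta_+$ and $\nu^2=-q^3\beta_-/\beta_+^*$, which collapse to \eqref{constraint} because $\beta_-^*/\beta_+$ is real, is if anything more explicit than the paper (which also tacitly uses that the hypothesis makes $\nu$ real, needed for $J^2=-\operatorname{id}$, a point you handle correctly).

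However, there is a genuine gap: you verify only antilinearity and the three sign relations $J^2=-\operatorname{id}$, $J\circ\gamma=-\gamma\circ J$, $J\circ D=D\circ J$, but these signs only fix the ${\rm KO}$-dimension; they do not by themselves make $J$ a \emph{real structure}. Following Connes, and as the paper spells out in conditions \eqref{order-0} and \eqref{order-1}, one must also check the order-zero condition $[u,\,JvJ]=0$ and the first-order condition $[[D,u],\,JvJ]=0$ for all $u,v\in\Weyl p q$, where $\Weyl p q$ acts on $\mathcal{S}$ by left multiplication. Neither is automatic here: the paper proves them by direct computation, using that $J$ intertwines left multiplication with right-multiplication-by-adjoints on each summand of $\mathcal{S}$, and that the skew-derivation property of $\partial_\pm$ (with $\sigma_\pm$ restricting to the identity on the degree-zero subalgebra) gives $[D,u](a{\mathsf{s}}_++b{\mathsf{s}}_-)=\beta_+\partial_+(u)b\,{\mathsf{s}}_++\beta_-\partial_-(u)a\,{\mathsf{s}}_-$, after which both commutators are seen to vanish termwise. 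Without these two verifications the conclusion that $J$ ``equips $D$ with a real structure'' of ${\rm KO}$-dimension two is not established; you should add them to complete the proof.
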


\begin{proof}
In order for $J$ to be a real structure for $D$ (of ${\rm KO}$-dimension two) it needs to satisfy the following f\/ive conditions:
\begin{subequations}\label{real}
\begin{gather}
J^2 = -\operatorname{id} \label{j2}
\\
J\circ\gamma = - \gamma\circ J, \label{jgamma}
\\
J\circ D = D\circ J, \label{jd}
\\
[u, J vJ] = 0, \qquad \mbox{for all}\ \quad u,v\in \Weyl p q, \label{order-0}
\\
[[D,u], J vJ] = 0, \qquad \mbox{for all} \quad u,v\in \Weyl p q, \label{order-1}
\end{gather}
\end{subequations}
where $[-,-]$ denotes the commutator and elements of $\Weyl p q$ act on $\mathcal{S}$ by left multiplication; cf.~\cite{Con:rea}. The f\/irst two conditions are easy to check:
\begin{gather*}
J\circ J(a{\mathsf{s}}_++b{\mathsf{s}}_-) = J\big({-}\nu^{-1} b^*{\mathsf{s}}_+ + \nu a^*{\mathsf{s}}_-\big) = - a {\mathsf{s}}_+-b{\mathsf{s}}_-,
\end{gather*}
by the involution property of $*$, and
\begin{gather*}
J\circ \gamma (a{\mathsf{s}}_++b{\mathsf{s}}_-) = J(a{\mathsf{s}}_+-b{\mathsf{s}}_-) =\nu^{-1} b^*{\mathsf{s}}_+ + \nu a^*{\mathsf{s}}_-,
\\
\gamma\circ J (a{\mathsf{s}}_++b{\mathsf{s}}_-) = \gamma\big({-}\nu^{-1} b^*{\mathsf{s}}_+ + \nu a^*{\mathsf{s}}_-\big) = -\nu^{-1} b^*{\mathsf{s}}_+ - \nu a^*{\mathsf{s}}_- = -J\circ \gamma (a {\mathsf{s}}_++b{\mathsf{s}}_-) ,
\end{gather*}
as required.

To check \eqref{jd} we use the def\/initions of $D$ and $J$, and compute, for all $a\in \AWeyl p q_{-1}$, $b\in \AWeyl p q_{1}$,
\begin{gather*}
D\circ J(a {\mathsf{s}}_++b {\mathsf{s}}_-) = D(-\nu^{-1} b^*{\mathsf{s}}_+ + \nu a^*{\mathsf{s}}_-)
 = q^{-1}\beta_+\nu\partial_+ (a^* ) {\mathsf{s}}_+ - q\beta_-\nu^{-1}\partial_- (b^* ) {\mathsf{s}}_-,
\end{gather*}
and
\begin{align}
J\circ D(a {\mathsf{s}}_++b {\mathsf{s}}_-) &= J(\beta_+ q^{-1}\partial_+(b) {\mathsf{s}}_+ + \beta_- q \partial_-(a) {\mathsf{s}}_-)\nonumber \\
& = -q^{-1}\beta^*_-\nu^{-1}\partial_-(a)^*{\mathsf{s}}_+ + q\beta^*_+\nu\partial_+(b)^* {\mathsf{s}}_-.\label{jd.calc}
\end{align}
Note that, in view of the def\/inition \eqref{star}, if $c\in \AWeyl p q$ is a homogeneous element, then $|c^*| = -|c|$. Applying the involution $*$ to the def\/inition of $d$, using the $*$-structure on $\Omega$ in Remark~\ref{rem.star} and then the def\/initions of $\sigma_\pm$~\eqref{sigma} one f\/inds that, for all homogeneous~$c\in \AWeyl p q$,
\begin{gather*}
\partial_+(c)^* = q^{|c|-2}\partial_- (c^* ), \qquad \partial_-(c)^* = q^{|c|+2}\partial_+ (c^* ).
\end{gather*}
Since $|a|=-1$ and $|b|=1$, equality \eqref{jd.calc} can be developed further to give
\begin{gather*}
J\circ D(a {\mathsf{s}}_++b {\mathsf{s}}_-) = -q^2\nu^{-1}\beta_-^*\partial_+(a^*) {\mathsf{s}}_+ + q^{-2}\nu\beta_+^*\partial_-(b^*) {\mathsf{s}}_-,
\end{gather*}
and so $J\circ D = D\circ J$ by \eqref{constraint}.

To check the order-zero condition \eqref{order-0}, take any $a\in \AWeyl p q_{-1}$ and $u,v\in \Weyl p q$, and compute
\begin{align*}
[u, J vJ](a {\mathsf{s}}_+) &= uJ ( vJ (a {\mathsf{s}}_+ ) ) - J ( vJ (ua {\mathsf{s}}_+ ) )
 = \nu (u J (v a^*{\mathsf{s}}_- ) - J (v a^*u^*{\mathsf{s}}_- ) )\\
&= -uav^*{\mathsf{s}}_+ +uav^*{\mathsf{s}}_+ =0.
\end{align*}
In a similar way one proves that also for all $b\in \AWeyl p q_{1}$ and $u,v\in \Weyl p q$,
\begin{gather*}
[u, J vJ](b {\mathsf{s}}_-)=0.
\end{gather*}
 Finally, as the f\/irst step towards proving the order-one condition \eqref{order-1}, observe that the skew derivation property of $\partial_\pm$ implies that, for all $u\in \Weyl p q$, $a\in \AWeyl p q_{-1}$ and $b\in \AWeyl p q_{1}$,
\begin{gather*}
[D,u](a {\mathsf{s}}_++b {\mathsf{s}}_-) = \beta_+\partial_+(u)b {\mathsf{s}}_++\beta_-\partial_-(u)a {\mathsf{s}}_-,
\end{gather*}
hence, for all $v\in \Weyl p q$,
\begin{align*}
[[D,u], J vJ] (a {\mathsf{s}}_+) &= - [D,u](av^*{\mathsf{s}}_+) - \beta_- J(vJ(\partial_-(u)a {\mathsf{s}}_-))\\
&= -\beta_-\partial_-(u)av^*{\mathsf{s}}_- +\beta_-\partial_-(u)av^*{\mathsf{s}}_- =0,
\end{align*}
and similarly $[[D,u], J vJ] (b {\mathsf{s}}_-) =0$. The distribution of signs on the right hand sides of relations \eqref{j2}, \eqref{jgamma} and \eqref{jd} indicates that $D$ is a Dirac operator with a real structure of ${\rm KO}$-dimension two, as stated.
\end{proof}

\begin{Remark}
There is some level of arbitrariness in the f\/ixing of the ${\rm KO}$-dimension of $D$. Choosing the parameters $\beta_\pm$ in such a way that $\beta_-^*/\beta_+>0$, will result in the change of the sign on the right hand side of equation \eqref{j2}. This distribution of signs on the right hand sides of relations \eqref{j2}, \eqref{jgamma} and \eqref{jd} corresponds to the ${\rm KO}$-dimension being~6 (modulo~8).
\end{Remark}

\section{Outlook}

The paper the reader is presented with herein is concerned with f\/irst-order dif\/ferential (and integral) calculi on a class of generalized Weyl algebras. A detailed study of higher forms on~$\AWeyl p q$ and~$\Weyl p q$, both dif\/ferential and integral, is a natural next step. Every f\/irst-order dif\/ferential calculus admits an extension to a full dif\/ferential graded algebra. Such a universal extension might be trivial, might lead to the dif\/ferential structure of classical dimensions or it might be very large, bearing no resemblance to what can be expected from the classical case. In the latter case, further quotients that reduce the dimension of modules of higher forms might be possible. In the cases studied in the present paper, it would be interesting and indeed desired to f\/ind out whether the calculus on a regular generalized Weyl algebra~$\AWeyl p q$ described in Theorem~\ref{thm.calc.A} admits a dif\/ferential graded algebra of `classical dimensions', i.e., such that the module of two-forms is free of rank~3 and the module of three-forms is free of rank~1, with no higher forms than three-forms (in other words: the calculus admits a volume three-form). Should such a full dif\/ferential calculus exist, does the divergence constructed in Proposition~\ref{prop.int} extend to a f\/lat hom-connection so as to produce a complex of integral forms? Finally, is this complex isomorphic to the de Rham complex of dif\/ferential forms, i.e., is the dif\/ferential calculus integrable? Since~$\AWeyl p q$ is an af\/f\/ine algebra of Gelfand--Kirillov dimension three, the existence of such an isomorphism would establish {\em differential smoothness} of~$\AWeyl p q$ in the sense of~\cite{BrzSit:smo}.

Similar questions can and should be asked about the f\/irst-order dif\/ferential calculus $(\bar{\Omega}, d)$ on~$\Weyl p q$. In this case, the extension should contain only two-forms, and the module of two-forms should be free of rank one. This is dictated by the Gelfand--Kirillov dimension of~$\Weyl p q$. It is natural to enquire in this case whether the separation of one-forms into holomorphic and anti-holomorphic components carries on to the higher calculus so as to deliver a complex structure on~$\Weyl p q$~\cite{BegSmi:com,KhaLan:hol}. Again the question about dif\/ferential smoothness of~$\Weyl p q$ should be addressed.

It might be that in addition to the $q^2$-separability another condition on $p$ should be imposed to guarantee the integrability of dif\/ferential calculi on $\AWeyl p q$ and $\Weyl p q$, and to establish dif\/ferential smoothness of these algebras. As explained in \cite{Liu:hom}, generalized Weyl algebras $\Weyl p q$ are homologically smooth (i.e., they have f\/inite length resolutions by f\/initely generated projective bimodules) provided the def\/ining polynomial $p$ has no repeated roots (in case of an algebraically closed f\/ield this is equivalent to the separability of ~$p$). Should no additional (to $q^2$-separability) conditions on $p$ be required or should required additional conditions be dif\/ferent from separability, the regular generalized Weyl algebras $\Weyl p q$ could serve as examples of algebras that are dif\/ferentially but not necessarily homologically smooth. A relationship between these two forms of (noncommutative) smoothness is not yet understood.

The construction of a real Dirac operator on $\Weyl p q$ presented above is purely algebraic. An obvious next step is to study analytic aspects of this construction in order to form real spectral triples on~$\Weyl p q$ (as opposed to algebraic Dirac operators with a real structure hitherto described). In the f\/irst instance one should study the theory of $*$-representations of~$\Weyl p q$, the rudiments of which are outlined in~\cite{Brz:gen} in order to construct a suitable Hilbert space. This process might force one to impose additional conditions on the polynomial~$p$. One can also follow the procedure of Beggs and Majid employed successfully in~\cite{BegMaj:spe} in the case of the quantum sphere and the quantum disc. This procedure makes use of the integral on the studied algebra. The integral~$\Lambda$ on~$\AWeyl p q$ constructed in Proposition~\ref{prop.int} can be restricted to~$\Weyl p q$. Since the integral space is f\/inite but not one-dimensional, before the restriction of $\Lambda$ can be used to obtain an inner product, a suitable hermitian inner product on the integral space need be constructed.

These are topics for future work.

\subsection*{Acknowledgements}
The author would like to express his gratitude to the referees for many helpful and detailed comments and suggestions.

\pdfbookmark[1]{References}{ref}
\LastPageEnding

\end{document}